\theoremstyle{plain}
\newtheorem{proposition}{Proposition}[section]
\newtheorem{theorem}[proposition]{Theorem}
\newtheorem{lemma}[proposition]{Lemma}
\newtheorem{corollary}[proposition]{Corollary}
\theoremstyle{definition}
\newtheorem{definition}[proposition]{Definition}
\newtheorem*{ack}{Acknowledgements}
\theoremstyle{remark}
\newtheorem{remark}[proposition]{Remark}
\newtheorem{remarks}[proposition]{Remarks}
\DeclareMathOperator{\Stab}{Stab}
\DeclareMathOperator{\Fix}{Fix}
\DeclareMathOperator{\Prob}{Prob}
\newcommand{\ms}[1]{\mathscr{#1}} 
\newcommand{\mc}[1]{\mathcal{#1}}
\newcommand{\ol}[1]{\overline{#1}}
\newcommand{\action}{\curvearrowright}
\newcommand{\barFH}{(\overline{\textrm{FH}})}
\newcommand{\CR}{\mathcal{C}}
\newcommand{\Conv}{\mathcal{C}}
\newcommand{\R}{\mathbb{R}}                          
\newcommand{\N}{\mathbb{N}}                          
\newcommand{\Z}{\mathbb{Z}}                          
\DeclareMathOperator{\dd}{d\!}
\newcommand{\SG}{\mathcal{S}(G)}
\newcommand{\RG}{\mathrm{R}_{\mathrm{a}}(G)}
\newcommand{\ASG}{\mathcal{S}_{\mathrm{a}}(G)}
\newcommand{\RASG}{\mathcal{S}_{\mathrm{ra}}(G)}
\newcommand{\IRS}{\mathrm{IRS}(G)}
\let\@wraptoccontribs\wraptoccontribs
\begin{document}
\title{Amenable Invariant Random Subgroups}

\author{Uri Bader}
\address{Mathematics Department, Technion - Israel Institute of Technology, Haifa, 32000, Israel}
\email{uri.bader@gmail.com}

\author{Bruno Duchesne}
\address{Institut \'Elie Cartan de Lorraine, Universit\'e de Lorraine, B.P. 70239, F-54506 Vandoeuvre-l\`es-Nancy Cedex, France}
\email{bruno.duchesne@univ-lorraine.fr}

\author{Jean L\'ecureux}
\address{D\'epartement de Math\'ematiques - B\^atiment 425, Facult\'e des Sciences d'Orsay, Universit\'e Paris-Sud 11, F-91405 Orsay, France}
\email{jean.lecureux@math.u-psud.fr}

\contrib[With an appendix by]{Phillip Wesolek}
\address{Universit\'e catholique de Louvain, Institut de Recherche en Math\'ematiques et Physique (IRMP), Chemin du Cyclotron 2, box L7.01.02, 1348 Louvain-la-Neuve, Belgium}
\email{phillip.wesolek@uclouvain.be}

\date{\today}
\keywords{Invariant random subgroups, Amenability, Chabauty Topology, Property (T), Relative properties for a pair of groups}
\thanks{U.B acknowledges the support of the European Research Council. B.D. is supported in part by Lorraine Region and Lorraine University.}

\begin{abstract} We show that an amenable Invariant Random Subgroup of a locally compact second countable group lives in the amenable radical. This answers a question raised in the introduction of \cite{Abert:2012jk}.
We also consider an opposite direction, property (T), and prove a similar statement for this property.
\end{abstract}
\maketitle

%
%

\epigraph{\it Yes, the IRS is amenable to working with you if you are cooperative and willing to work with them.}{--- taxes.answers.com}
\section{Introduction}

\subsection{Invariant random subgroups}
Let $G$ be a locally compact second countable group.
We denote by $\mathcal{S}(G)$ the space of closed subgroups of $G$. We endow it with the Chabauty topology which is defined in the following way. For $K\subseteq G$ compact and $U\subseteq G$ open, we define the two following subsets of $\SG$
\[ O^K=\{H\in\SG;\ H\cap K\neq\emptyset\}\ \mathrm{and}\ O_U=\{H\in\SG;\ H\cap U=\emptyset\}.\]
The Chabauty topology is then the smallest topology on $\SG$ containing the $O_K$'s and $O_U$'s when $K$ and $U$ vary respectively among compact and open subspaces of $G$. With this topology $\SG$ is a compact metrizable space \cite[Propositions 1.7\&1.8]{MR2406240}. Maybe more concretely, a sequence $(H_n)$ of closed subgroups converges to $H$ if and only if the following properties hold.

\begin{itemize}
\item For any $h\in H$ and $n\in\N$, there is $h_n\in H_n$ such that $(h_n)$ converges to  $h$.
\item For any converging sequence $(h_n)$ such that  $h_n\in H_n$ for all $n\in \N$, the limit is in $H$.
\end{itemize}

Recently, a new and fruitful point of view about --- non-free --- probability measure preserving (shortly p.m.p) actions appeared and is currently a fast growing field of research.

\begin{definition}An \emph{invariant random subgroup} (shortly IRS) is a Borel probability measure on $\SG$ which is invariant under the adjoint action of $G$ on $\SG$ by conjugations.
\end{definition}

We denote the space of IRSs by $\IRS$. A probabilistic point of view is the following: an IRS is a random closed subgroup of $G$ whose distribution is invariant under conjugations. We will alternate between the two points of view depending on the desire of short statements or precise ones.

The name \emph{IRS} first appeared in \cite{Abert:2012jk}  for countable groups. We refer to \cite{Abert:2012jk} for historical background on IRSs before the name was coined. Since this first appearance, IRSs appeared in several papers, for example in \cite{Abert:2012qy} and \cite{Biringer:2014fr} where recent references are given in the introduction.

Standard examples of IRSs are given by closed normal subgroups (Dirac masses in $\SG$) and lattices (in that case the measure is supported on the conjugacy class of the lattice). Thus, an IRS may be thought as a generalization of both normal subgroups and lattices. A general idea is that a statement true for normal subgroups and lattices should be true for IRSs.

There is a more general way to construct IRSs: Let $G\action X$ be a  p.m.p action, the map
\[\begin{array}{ccc}
  X & \to & \SG \\
  x & \mapsto & \Stab_G(x) \\
\end{array}\]
allows us to push the invariant probability measure on $X$ to $\SG$ and obtain an IRS. Actually, any IRS can be obtained that way (see \cite[Proposition 14]{Abert:2012jk} in the discrete case and  \cite{Abert:2012qy} in the general case). So IRSs can be seen as p.m.p actions outside the classical world of free actions (where $\Stab_G(x)=\{e\}$ for almost all $x\in X$). An IRS $\mu$ is said to be \emph{ergodic} if $G\action(\SG,\mu)$ is ergodic.

\subsection{Amenability}

In their generalization of Kesten theorem about amenable normal subgroups to IRSs, the authors of \cite{Abert:2012jk} were led to the study of amenable IRSs, which are IRSs supported on amenable subgroups. We denote by $\ASG$ the subspace of $\SG$ consisting of amenable closed subgroups. It is an open question to decide whether $\ASG$ is closed. Nevertheless, P. Wesolek shows in Appendix \ref{appendix} that it is a Borel subset. Recall that among amenable closed normal subgroups, there is a unique maximal one, which is called the \emph{amenable radical} of $G$ (see for example \cite[Proposition 4.1.12]{MR776417}). We denote it by $\RG$.

Since we consider only subgroups of $G$, it is more natural to use the  notion of \emph{relative} amenability as introduced in \cite{Caprace:2013kq} via the fixed point property. Let us recall that $G$ is amenable if any non-empty convex compact $G$-space has a fixed point.

\begin{definition}
A subgroup $H\in\SG$ is \emph{relatively amenable} if for any non-empty convex compact $G$-space there is a $H$-fixed point.
\end{definition}

Clearly, every amenable subgroup of $G$ is relatively amenable. We denote by $\RASG$ the closed subset of relatively amenable subgroups of $G$.

\begin{definition}\label{amenIRS}
An IRS $\mu$ of $G$ is \emph{relatively amenable} if $\mu(\RASG)=1$ and it is said to be \emph{amenable} if $\mu(\ASG)=1$.
\end{definition}


At the end of the introduction of \cite{Abert:2012jk}, the authors state that if $G$ is a linear group any amenable IRS lies in $\RG$ (see \cite{Gelander:2014kq} for a proof). They implicitly ask if the same holds in the general case. The same question also appeared in the introduction of \cite{Biringer:2014fr} and in \cite[\S7]{Tucker-Drob:2012vn}. We prove the following statement which yields a positive answer as a corollary.

\begin{theorem}\label{radrm}
Any relatively amenable IRS of $G$ lies in the amenable radical.
\end{theorem}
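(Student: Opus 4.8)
The plan is to reduce the statement to the following claim: if $\mu$ is a relatively amenable IRS of $G$, then $\mu$ is the Dirac mass at a single closed subgroup, which is then necessarily normal (being a fixed point of the conjugation action whose distribution is a Dirac mass) and relatively amenable, hence contained in $\RG$. To prove that $\mu$ is a Dirac mass one wants to exploit the tension between two facts: on the one hand, relative amenability gives a fixed point in \emph{every} convex compact $G$-space; on the other hand, the space $\SG$ itself — or rather a space of measures built from it — is a convex compact $G$-space on which the IRS $\mu$ should be ``concentrated'' in a way that forces rigidity.

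Here is the key construction I would carry out. Given a relatively amenable IRS $\mu$, realise it (by the already-quoted fact that every IRS comes from a p.m.p.\ action via stabilisers) as the stabiliser distribution of an ergodic p.m.p.\ action $G \action (X,\nu)$; by ergodic decomposition we may assume $\mu$ is ergodic. Now I would look for a canonical convex compact $G$-space attached to the data — for instance the space of probability measures on $\SG$, or better, a fibred/relative version: over each point $x \in X$ consider the compact convex set of probability measures on $\SG$, and use relative amenability of the subgroup $\Stab_G(x) = H$ to produce, $\nu$-measurably and $G$-equivariantly, an $H$-fixed probability measure $m_x \in \prob(\SG)$. The point is that $\Stab_G(x)$ fixes $x$, so any $\Stab_G(x)$-equivariant choice of a point in a convex compact $G$-space is ``allowed'' at $x$; relative amenability is exactly what makes such a measurable equivariant section exist. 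One then pushes forward to get a genuine $G$-map $X \to \prob(\SG)$, integrates against $\nu$, and uses ergodicity/invariance to pin down the resulting measure.

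The heart of the argument — and the step I expect to be the main obstacle — is to choose the target convex compact $G$-space so cleverly that the $H$-fixed point it produces, pulled back along the action, forces $H$ itself to be normal (equivalently, forces $\mu$ to be supported on normal subgroups, hence on a single one by ergodicity). A natural candidate is to feed relative amenability a space that ``sees'' the non-normality of $H$: for example, if $H$ is not normal then the $G$-orbit of $H$ in $\SG$ is nontrivial, and one can try to build a convex compact $G$-space — such as $\prob$ of a suitable compactification of $G/N_G(H)$, or of the orbit closure $\overline{G\cdot H} \subseteq \SG$ — on which an $H$-fixed point cannot exist unless the orbit collapses. Making this simultaneously (i) canonical enough to depend measurably on $x \in X$, (ii) genuinely compact (the subtlety: $G/N_G(H)$ need not be compact, so one must compactify, and control what happens at the boundary added), and (iii) incompatible with an $H$-fixed point, is the delicate part; I would expect to need the Chabauty compactness of $\SG$ itself, together with the amenability-versus-property-(T)-style dichotomy hinted at in the abstract, to close the loop. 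Once $\mu$ is shown to be a Dirac mass at a normal subgroup $N$, that $N$ is relatively amenable as a \emph{normal} subgroup, and a normal relatively amenable subgroup is amenable (this is the Caprace–Monod characterisation already alluded to), so $N \le \RG$, completing the proof.
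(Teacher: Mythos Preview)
Your central reduction --- that an ergodic relatively amenable IRS must be the Dirac mass at a single normal subgroup --- is false, and this breaks the entire strategy. Take $G=S_3$ (finite, hence amenable, so $\RG=G$) and let $\mu$ be the uniform measure on the three conjugates of $\langle(12)\rangle$. This is an ergodic amenable IRS; it is not a Dirac mass, and it is not supported on normal subgroups. More generally, for any $G$ with nontrivial amenable radical one can build non-Dirac ergodic amenable IRSs inside $\mathcal{S}(\RG)$. The conclusion of the theorem is only that $\mu$-almost every $H$ is \emph{contained in} $\RG$, not that $H$ is normal; so the convex compact $G$-space you are hunting for --- one in which an $H$-fixed point would force $H$ to be normal --- cannot exist.

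The paper's argument avoids this trap by never trying to pin down $\mu$ itself. Instead it introduces the \emph{normal closure} $N$ of $\mu$ (the smallest closed subgroup with $\mu(\mathcal{S}(N))=1$, automatically normal) and proves directly that $N$ is relatively amenable, hence amenable. The mechanism is: given any convex compact $G$-space $C$, reduce to $C$ minimal, then consider the Borel $G$-map $H\mapsto\Fix(H)$ from $\RASG$ into the space $\Conv(C)$ of closed convex subsets of $C$. The key technical point (Section~\ref{cone}) is that $\Conv(C)$ is itself a convex compact $G$-space, and a barycenter computation (Lemma~\ref{prob(conv(C))2}) shows that the only $G$-invariant probability measure on $\Conv(C)$ is $\delta_C$ when $C$ is minimal. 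Pushing $\mu$ forward therefore gives $\delta_C$, so $\mu$-almost every $H$ fixes $C$ pointwise; hence $N$ acts trivially on $C$. Your final step --- normal plus relatively amenable implies amenable, via Caprace--Monod --- is the correct closing move, but applied to $N$ rather than to some subgroup produced by a Dirac mass.
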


If $H\leq G$ is a closed subgroup of $G$, we identify $\mathcal{S}(H)$ with the closed subspace of $\SG$ consisting of closed subgroups of $G$ included in $H$. More precisely,  Theorem \ref{radrm} means that if $\mu\in\IRS$ and $\mu(\RASG)=1$ then $\mu(\mathcal{S}({\RG}))=1$.

\begin{corollary}\label{rad} Any amenable IRS of $G$ lies in the amenable radical of $G$.
\end{corollary}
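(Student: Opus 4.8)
The plan is to obtain Corollary \ref{rad} as an immediate consequence of Theorem \ref{radrm}, so that the real content has already been established. First I would recall the elementary implication noted above: every amenable closed subgroup of $G$ is relatively amenable, since a non-empty convex compact $G$-space is a fortiori a non-empty convex compact $H$-space, and an amenable group fixes a point in any such space. This yields the set inclusion $\ASG \subseteq \RASG$ inside $\SG$.

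Next, given an amenable IRS $\mu$, i.e. $\mu \in \IRS$ with $\mu(\ASG) = 1$ in the sense of Definition \ref{amenIRS}, I would invoke that $\ASG$ is Borel (by the appendix) and that $\RASG$ is closed, so that both are $\mu$-measurable; the inclusion $\ASG \subseteq \RASG$ then forces $\mu(\RASG) = 1$, so $\mu$ is a relatively amenable IRS. Applying Theorem \ref{radrm} gives $\mu(\mathcal{S}(\RG)) = 1$, which by the remark following Theorem \ref{radrm} is precisely the assertion that $\mu$ lies in the amenable radical of $G$.

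I do not expect any genuine obstacle here: the entire difficulty has been absorbed into Theorem \ref{radrm}, and the deduction only uses the trivial direction ``amenable $\Rightarrow$ relatively amenable'' together with the Borel measurability of $\ASG$, both of which are already available in the text.
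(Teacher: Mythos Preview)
Your argument is correct and is exactly the intended deduction: the paper does not spell out a proof of Corollary~\ref{rad} because it follows immediately from Theorem~\ref{radrm} via the inclusion $\ASG\subseteq\RASG$, which is precisely what you do. The only comment is that you need not worry separately about measurability of $\RASG$ to conclude $\mu(\RASG)=1$ from $\mu(\ASG)=1$: monotonicity of the measure along the inclusion already gives this once $\ASG$ is known to be Borel.
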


\begin{remark} This theorem is related to \cite[Theorem 5.4]{MR2059438} which concerns strongly non-amenable groups, namely discrete groups with positive first Betti number.
\end{remark}

%

This previous theorem allows us to extend  \cite[Theorem 5]{Abert:2012jk} --- with the same proof --- outside the linear world. Let $\Gamma$ be a group generated by a finite symmetric set $S$. A sequence $(H_n)$ of finite index subgroups of $\Gamma$ is said to \emph{locally approximates} $\Gamma$ if the Schreier graphs $\mathrm{Sch}(\Gamma/H_n,S)$ converge to the Cayley graph $\mathrm{Cay}(\Gamma,S)$ in Benjamini-Schramm convergence \cite{MR1873300}. We obtain the following theorem, where $\rho_0\!\left(\mathrm{Sch}(\Gamma/H_n,S)\right)$ and $\rho(\mathrm{Cay}(\Gamma,S))$ are the spectral radii of the Markov averaging operator on respectively $\ell^2_0(\mathrm{Sch}(\Gamma/H_n,S))$ and $\ell^2(\mathrm{Cay}(\Gamma,S))$.

\begin{theorem}Let $\Gamma$ be a finitely generated  group with  trivial amenable radical and let $S$ be a finite symmetric generating set of $\Gamma$. Let $(H_n)$ be a sequence of subgroups of finite index such that $|\Gamma: H_n|\to\infty$ and
\[\overline{\lim}\ \rho_0\!\left(\mathrm{Sch}(\Gamma/H_n,S)\right)\leq\rho(\mathrm{Cay}(\Gamma,S)).\footnote{All along this text $\overline{\lim}$ and $\underline{\lim}$ denote respectively the limit superior and the limit inferior of  a sequence of real numbers.}\]
Then $(H_n)$ locally approximates $\Gamma$.
\end{theorem}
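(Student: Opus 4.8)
The plan is to reproduce the proof of \cite[Theorem 5]{Abert:2012jk} essentially verbatim: Corollary \ref{rad} now supplies, for an arbitrary finitely generated group, exactly the ingredient that \cite{Abert:2012jk} could only invoke under a linearity hypothesis, namely that an amenable IRS is supported on the amenable radical. So the task is mainly to recall that argument and to observe that it is insensitive to the linearity assumption.

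The first step is to pass to the IRS picture. For each $n$, the transitive p.m.p.\ action of $\Gamma$ on $\Gamma/H_n$, equipped with the uniform probability measure, produces via the stabilizer map $gH_n\mapsto gH_ng^{-1}$ an IRS $\mu_n$ of $\Gamma$ carried by the finitely many conjugates of $H_n$; and $\mathrm{Sch}(\Gamma/H_n,S)$ Benjamini--Schramm converges to $\mathrm{Cay}(\Gamma,S)$ if and only if $\mu_n$ converges weak-$*$ to the Dirac mass $\delta_{\{e\}}$ at the trivial subgroup, since $\mathrm{Sch}(\Gamma/\{e\},S)=\mathrm{Cay}(\Gamma,S)$. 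Because $\SG$ is compact metrizable, $\IRS$ is weak-$*$ compact, so it is enough to prove that every weak-$*$ accumulation point $\mu$ of $(\mu_n)$ equals $\delta_{\{e\}}$. I would fix such a $\mu$, obtained along a subsequence; as $|\Gamma: H_n|\to\infty$, every subgroup in the support of $\mu$ has infinite index, so $\mu$-almost every Schreier graph is infinite and carries a well-defined spectral radius (the norm of its simple-random-walk operator).

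The second step uses the spectral hypothesis. Since $\mathrm{Cay}(\Gamma,S)$ covers each $\mathrm{Sch}(\Gamma/H,S)$, comparing return probabilities yields $\rho\bigl(\mathrm{Sch}(\Gamma/H,S)\bigr)\geq\rho(\mathrm{Cay}(\Gamma,S))$ for every closed $H\leq\Gamma$, and hence the spectral radius $\rho(\mu)$ of the limit IRS is at least $\rho(\mathrm{Cay}(\Gamma,S))$. Conversely, counting closed walks of length $2k$ in the finite graphs $\mathrm{Sch}(\Gamma/H_n,S)$ and letting $n\to\infty$ along the subsequence --- the contribution of the eigenvalues $\pm1$ being $O(1/|\Gamma: H_n|)$ and so negligible in the limit --- gives the semicontinuity estimate $\rho(\mu)\leq\overline{\lim}\,\rho_0\bigl(\mathrm{Sch}(\Gamma/H_n,S)\bigr)$ that underlies \cite[Theorem 5]{Abert:2012jk}. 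Together with the hypothesis $\overline{\lim}\,\rho_0\bigl(\mathrm{Sch}(\Gamma/H_n,S)\bigr)\leq\rho(\mathrm{Cay}(\Gamma,S))$, this forces $\rho(\mu)$ to equal its minimal possible value $\rho(\mathrm{Cay}(\Gamma,S))$. Kesten's theorem for invariant random subgroups \cite{Abert:2012jk}, applied to the ergodic components of $\mu$, then shows that $\mu$ is amenable, i.e.\ $\mu(\ASG)=1$.

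The final step is to invoke Corollary \ref{rad}: the amenable IRS $\mu$ is concentrated on the closed subgroups of the amenable radical of $\Gamma$, which is trivial by assumption, so $\mu=\delta_{\{e\}}$. Since every accumulation point of $(\mu_n)$ is $\delta_{\{e\}}$, we conclude $\mu_n\to\delta_{\{e\}}$, which is precisely the statement that $(H_n)$ locally approximates $\Gamma$. The genuinely new input here is just Corollary \ref{rad}; the walk-counting semicontinuity bound and Kesten's theorem for IRSs are already available in \cite{Abert:2012jk}, and the only point that needs a little care is that Kesten's theorem is stated for ergodic IRSs, so one passes to ergodic components before applying it and reassembles afterwards. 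There is, in effect, no real obstacle beyond checking that nothing in \cite[Theorem 5]{Abert:2012jk} used linearity except through the statement now furnished by Corollary \ref{rad}.
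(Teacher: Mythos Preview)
Your proposal is correct and follows exactly the route the paper takes: the paper's proof literally says to repeat \cite[Theorem~5]{Abert:2012jk} verbatim, substituting Corollary~\ref{rad} for the linear-case result \cite[Theorem~3]{Abert:2012jk}, and you have spelled out precisely that argument. The only minor remark is that you could cite Corollary~\ref{rad} directly rather than framing it as ``Theorem~\ref{rad}'', but the content is identical.
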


\begin{proof}Using Theorem \ref{rad} instead of \cite[Theorem 3]{Abert:2012jk} the proof of \cite[Theorem 5]{Abert:2012jk} can be repeated verbatim.
\end{proof}

As explained in \cite{Tucker-Drob:2012vn}, Theorem \ref{rad} also yields positive answers to \cite[Questions 7.2, 7.4 \& 7.5]{Tucker-Drob:2012vn}, see Figure 1 in that paper. Namely, arguments there and Theorem \ref{rad} show the following.

\begin{theorem}\label{shift minimal}
Let $\Gamma$ be a countable group with trivial amenable radical. Any non-trivial p.m.p action of $\Gamma$ that is weakly contained in the Bernoulli shift $\Gamma\action[0,1]^\Gamma$, is free.
\end{theorem}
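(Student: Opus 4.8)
The plan is to follow the argument of \cite{Tucker-Drob:2012vn} and to supply its missing input, namely Corollary \ref{rad}. We may assume $\Gamma$ is infinite (a finite group with trivial amenable radical is trivial). As a discrete, hence locally compact second countable, group, $\Gamma$ is covered by Corollary \ref{rad}; since $\RG=\{e\}$, we have $\mathcal{S}(\RG)=\{\{e\}\}$, so Corollary \ref{rad} says that the only amenable IRS of $\Gamma$ is the Dirac mass $\delta_{\{e\}}$. Equivalently, a p.m.p.\ action of $\Gamma$ whose point stabilizer is almost surely an amenable subgroup is essentially free. So it suffices to prove: if $a=\Gamma\action(X,\mu)$ is non-trivial and weakly contained in the Bernoulli shift $\Gamma\action[0,1]^\Gamma$, then its stabilizer IRS $\theta_a$ --- the push-forward of $\mu$ under $x\mapsto\Stab_\Gamma(x)$ --- is amenable.

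First I would observe that $a$ is \emph{tempered}: its Koopman representation $\kappa^0_a$ on $L^2_0(X,\mu)$ is weakly contained in the left regular representation $\lambda_\Gamma$. Writing $L^2([0,1])=\C 1\oplus L^2_0$ and expanding the tensor product over $\Gamma$ exhibits $L^2_0([0,1]^\Gamma)$ as a countable direct sum of blocks indexed by the non-empty finite subsets of $\Gamma$ on which a function genuinely depends; $\Gamma$ permutes these blocks, and each $\Gamma$-orbit of blocks is a representation induced from the (finite, hence amenable) stabilizer in $\Gamma$ of a finite subset, hence is weakly contained in $\lambda_\Gamma$. Thus the Koopman representation of the Bernoulli shift on mean-zero functions is weakly equivalent to $\lambda_\Gamma$, and since weak containment of p.m.p.\ actions passes to the Koopman representations on mean-zero subspaces, $\kappa^0_a\prec\lambda_\Gamma$.

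The second step --- the substantive one, which I would quote from \cite{Tucker-Drob:2012vn} --- is that temperedness of $a$ forces $\theta_a$ to be an amenable IRS. For this one passes to the canonical ``labelled Schreier graph'' model $b$ of $\theta_a$: the p.m.p.\ $\Gamma$-action obtained by equipping each Schreier graph $\mathrm{Sch}(\Gamma/H)$ (with $H\sim\theta_a$), rooted at $eH$, with i.i.d.\ uniform $[0,1]$-labels on its vertices, and letting $\Gamma$ act by moving the root. Its stabilizer IRS is again $\theta_a$, and by an Ab\'ert--Weiss-type argument $b\prec a$, so $\kappa^0_b\prec\lambda_\Gamma$. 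On the other hand the ``degree-one'' part of $\kappa^0_b$ --- the functions reading off the label of the root --- is the direct integral $\int^{\oplus}_{\mathcal{S}(\Gamma)}\ell^2(\Gamma/H)\otimes L^2_0([0,1])\,d\theta_a(H)$, so $\ell^2(\Gamma/H)\prec\lambda_\Gamma$ for $\theta_a$-almost every $H$. By Hulanicki's theorem --- $H\le\Gamma$ is amenable if and only if the trivial representation of $H$ is weakly contained in $\lambda_H$, equivalently $\ell^2(\Gamma/H)\prec\lambda_\Gamma$ --- this says $\theta_a$-almost every $H$ is amenable. Hence $\theta_a$ is an amenable IRS, so $\theta_a=\delta_{\{e\}}$ by the reduction above; that is, $a$ is essentially free, which is Theorem \ref{shift minimal}.

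The only genuinely new ingredient is Corollary \ref{rad}; the two substantive points just used --- the Ab\'ert--Weiss-type statement $b\prec a$ for the labelled Schreier graph model, and the fibrewise analysis of $\kappa^0_b$ --- are carried out in \cite{Tucker-Drob:2012vn}, and I would invoke them. Were one to reprove the second step from scratch, I expect the main obstacle to be the last descent step: that weak containment of a direct integral of representations in $\lambda_\Gamma$ forces weak containment on $\theta_a$-almost every fibre, a measure-theoretic statement about the support of a direct integral.
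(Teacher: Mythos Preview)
Your proposal is correct and follows exactly the route the paper takes: the paper's ``proof'' is the single sentence preceding the theorem --- that the arguments of \cite{Tucker-Drob:2012vn} together with Corollary~\ref{rad} yield the result --- and you have simply unpacked what those arguments are (temperedness of actions weakly contained in the Bernoulli shift, the Ab\'ert--Weiss-type comparison with the labelled Schreier model, and the fibrewise Hulanicki criterion). There is nothing to add; your sketch is a faithful expansion of what the paper leaves implicit.
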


\begin{remark} A group satisfying the conclusion of Theorem \ref{shift minimal} is said to be \emph{shift-minimal}. The work in \cite{Tucker-Drob:2012vn} leads to the conclusion that shift-minimality is equivalent to triviality of the amenable radical.
\end{remark}

\subsection{Kazhdan property (T)}
An opposite property to amenability is Kazhdan property (T). This property is hereditary for finite covolume subgroups in $G$ (see for example \cite[Theorem 1.7.1]{MR2415834}). So, it is natural to hope it is also hereditary for IRSs; at least when there is no strict closed subgroup containing the IRS. In that case, the IRS is said to be \emph{spanning} (see \S\ref{span}).

\begin{definition}\label{rt}Let $\mathcal{S}_{r(T)}(G)$ be the subset of groups $H\in\SG$ such that $(G,H)$ has relative property (T). An IRS $\mu$ is said to have relative property (T) if $\mathcal{S}_{r(T)}(G)$  has measure 1.
\end{definition}


\begin{theorem}\label{(T)}Assume $G$ is finitely generated. If $G$ has a spanning IRS with relative property (T) then $G$ has property (T).

\end{theorem}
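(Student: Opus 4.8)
The plan is to prove that $G$ has property (FH) and then to invoke the Delorme--Guichardet theorem (applicable since $G$ is $\sigma$-compact). Suppose, towards a contradiction, that $G$ admits an affine isometric action $\alpha$ on a Hilbert space $\mathcal H$ without a fixed point, and write $\alpha(g)=\pi(g)+b(g)$ with linear part $\pi$ and cocycle $b\in Z^1(G,\pi)$. First one may assume $\mathcal H^G=0$: the component of $b$ in $Z^1(G,\mathcal H^G)=\mathrm{Hom}(G,\mathcal H^G)$ is a homomorphism $\phi$ which, by relative property (T) of $(G,H)$ for $\mu$-a.e.\ $H$ (so that the restriction $H^1(G,\mathrm{triv})\to H^1(H,\mathrm{triv})$ vanishes), is trivial on $\mu$-a.e.\ $H$; since $\mu$ is spanning, the closed subgroup $\ker\phi$ has full $\mu$-mass, hence equals $G$, so $\phi=0$ and we may pass to $(\mathcal H^G)^{\perp}$. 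Having no fixed point then means precisely that $b$ is unbounded. Finally realize $\mu$ as the law of $\Stab_G(x)$ for a p.m.p.\ action $G\action(X,m)$, and fix a finite generating set $S$ of $G$.

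The relative-(T) hypothesis is used next. For $m$-a.e.\ $x$ the pair $(G,\Stab_G(x))$ has relative property (T), hence the restriction of $b$ to $\Stab_G(x)$ is a coboundary: there is $v_x\in\mathcal H$ with $b(h)=v_x-\pi(h)v_x$ for all $h\in\Stab_G(x)$; take $v_x$ of minimal norm, so that $v_x\perp\mathcal H^{\Stab_G(x)}$. Since $G$ is finitely generated, the quantitative form of relative property (T) (reducing all Kazhdan sets to $S$) bounds $\|v_x\|$ by $c(x)\max_{s\in S}\|b(s)\|$ for a Borel function $c(\cdot)$ that is finite almost everywhere, so after a truncation $x\mapsto v_x$ is measurable and $m$-integrable. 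A direct computation from the cocycle relation and the equivariance $\alpha(g)\Fix(\alpha|_{\Stab_G(x)})=\Fix(\alpha|_{\Stab_G(gx)})$ yields $\alpha(g)v_x=v_{gx}+P_{\mathcal H^{\Stab_G(gx)}}b(g)$. Introduce the $G$-equivariant positive contraction $T=\int_X P_{\mathcal H^{\Stab_G(x)}}\,dm(x)=\int_{\mathcal S(G)}P_{\mathcal H^H}\,d\mu(H)$ on $\mathcal H$, and set $\bar v=\int_X v_x\,dm(x)$. Integrating the previous identity and using $G$-invariance of $m$ gives $\alpha(g)\bar v-\bar v=Tb(g)$ for every $g\in G$; replacing $v_x$ by $v_x+u_x$ for a measurable section $x\mapsto u_x\in\mathcal H^{\Stab_G(x)}$ replaces $Tb$ by $g\mapsto Tb(g)-(I-\pi(g))w$, where $w=\int_X u_x\,dm(x)$ may be taken to be any element of the closed span of the fibres $\mathcal H^{\Stab_G(x)}$. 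Consequently, it suffices to show that the cocycle $g\mapsto Tb(g)$ is a coboundary: the corresponding averaged vector is then $\alpha$-fixed, contradicting our assumption.

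This places the whole argument on the spectral behaviour of $T$ near $1$. The spanning hypothesis already gives $\ker(I-T)=\mathcal H^G=0$: if $Tw=w$ with $\|w\|=1$ then $P_{\mathcal H^{\Stab_G(x)}}w=w$ for $m$-a.e.\ $x$, i.e.\ $w$ is fixed by $\Stab_G(x)$ for a.e.\ $x$, so the closed subgroup $\{g:\pi(g)w=w\}$ carries full $\mu$-mass, hence equals $G$ by spanning, forcing $w\in\mathcal H^G$. What remains is to upgrade this to a genuine gap — that the averaging cannot destroy coboundary-ness, a spectral-gap statement for $T$ — and this is where finite generation is used decisively: one checks (in the spirit of the Appendix) that the Kazhdan data $H\mapsto\varepsilon(H)$ attached to the pairs $(G,H)$ is Borel on $\mathcal S(G)$, and combines the full-measure assumption with conjugation-invariance of $\mu$ to extract a single Kazhdan constant controlling $T$, which then makes $Tb$ a coboundary with a controlled primitive. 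I expect exactly this last uniformity step — passing from ``$(G,H)$ is Kazhdan for $\mu$-almost every $H$'' to a uniform spectral gap for the averaged operator $T$ — to be the main obstacle; the remainder is bookkeeping and soft functional analysis.
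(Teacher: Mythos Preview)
Your argument is incomplete, and the gap you yourself flag is not bookkeeping but the heart of the matter. You correctly derive $(I-T)b=\partial\bar v$, so what you need is that $I-T$ is invertible (equivalently $1\notin\sigma(T)$), not merely $\ker(I-T)=0$. The spanning hypothesis gives only the latter. Your suggestion to ``extract a single Kazhdan constant'' from conjugation-invariance does not work: the Kazhdan constant $\varepsilon(G,H,S)$ is a conjugacy-class invariant, but $\mu$ may charge infinitely many classes with constants tending to $0$, and nothing in the hypotheses prevents $\|T\|=1$. (The counterexample in the remark following the theorem, $G=\bigoplus_n\Z/2\Z$, shows exactly this failure mode when finite generation is dropped.) There is also a secondary issue: ``truncating'' to make $x\mapsto v_x$ integrable destroys the property $v_x\in\Fix(\alpha|_{\Stab_G(x)})$, so the identity $\alpha(g)v_x=v_{gx}+P_{\mathcal H^{\Stab_G(gx)}}b(g)$ no longer holds and the derivation of $(I-T)b=\partial\bar v$ collapses.

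The paper avoids the spectral-gap obstacle by a different route. It does \emph{not} aim for property (FH) directly; instead it uses Shalom's theorem, so it suffices to show $\overline{H^1}(G,\pi)=0$ for every \emph{irreducible} $\pi$. The finite-dimensional case is Proposition~\ref{FEAR}. In the infinite-dimensional (hence weakly mixing) case, Lemma~\ref{weak-mixing} says the only $G$-invariant probability on $\mathcal H$ is $\delta_0$; applying this to the map $(H_1,\dots,H_n)\mapsto$ ``minimal-norm difference between $\overline{F(H_1,\dots,H_{n-1},\varepsilon/3)}$ and $\Fix(H_n)$'' yields, by induction, that the set of $(\cup_i H_i,\varepsilon)$-almost-fixed vectors is nonempty for $\mu^n$-a.e.\ tuple. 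Finite generation enters through Lemma~\ref{positiv}: each generator is a product of elements lying in subgroups of positive $\mu$-mass, so one can realize any finite $F\subset G$ inside a product of $\mu$-generic $H_i$'s and produce $(F,\varepsilon)$-almost-invariant vectors. This gives $b\in\overline{B^1(G,\pi)}$ --- only reduced cohomology vanishes, not $H^1$ --- and Shalom's theorem finishes. The point is that weak mixing replaces the uniform spectral gap you were seeking: rather than averaging fixed points and inverting an operator, one shows the obstruction vectors are $\mu$-a.s.\ zero.
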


\begin{remarks}
\begin{enumerate}
\item It is clear from the definitions that if $G$ has property (T) then any of its IRS has relative property (T).
\item Theorem \ref{(T)} does not hold if one removes the finite generation assumption. Consider the group $G=\oplus_{n\in\N}\Z/2\Z$. This countable group is not finitely generated and thus does not have property (T). Let $\delta_n$ be the Dirac measure at the $n$-th copy of $\Z/2\Z$ and let $\mu$ be $6/\pi^2\sum_{n\in\N}\delta_n/n^2$. The measure $\mu$ is a spanning IRS with relative property (T).
\item For a result toward compact generation see Proposition \ref{compactgeneration}.
\end{enumerate}
\end{remarks}
%
%
%
%
%
%
%
%
%

\noindent
\textbf{Structure of the paper.} In Section \ref{cone}, we study the cone of convex weak*-compact subsets of a dual Banach space. We show that the set of convex weak*-compact subsets in the dual unit ball is a convex compact space itself. This construction will allow us to construct the barycenter of a measure on convex compact sets. The proof of Theorem \ref{rad} appears in Section \ref{amenirs}. The last section is devoted to relative property (T).

\section{Locally convex structure on the cone of convex weak*-compact subspaces}\label{cone}
For all this section we fix some real separable Banach space $(E,\|\ \|)$, we define $E_1$ to be its unit ball and $E^*$ its dual Banach space with its unit ball $E^*_1$. Any topological statement for subsets of $E^*$ will be relative to the weak*-topology. We aim to define a locally convex (Hausdorff) topological vector space $\mathcal{E}$ in which one can embed the set $\CR$ of compact convex (non-empty) subspaces of $E^*$.

First observe that $\CR$ is an \emph{abstract cone} \cite[Chapter 3]{Cones} with the operations $A+B=\{a+b;\ a\in A,\ b\in B\}$ and $\lambda A=\{\lambda a;\ a\in A\}$ for $\lambda\geq 0$ --- those sets are also easily seen to be compact convex subsets. 
For $C\in\CR$ and $b\in E_1$, we set $b^+(C)=\max_{c\in C}b(c)$ and $b^-(C)=\min_{c\in C} b(c)$.


Let $\mathcal{E}$ be the vector space $\prod_{b\in E_1}\R_b$, with the product topology $\tau$. This is a locally convex (Hausdorff) topological vector space. By the Hahn-Banach Separation Theorem, we get an injective  map
\[\begin{array}{rcl}
f\colon\CR&\to&\mathcal{E}\\
C&\mapsto& (b^+(C))_{b\in E_1}.
\end{array}\]
Observe that $b^+(C+C')=b^+(C)+b^+(C')$, $b^+(\lambda C)$=$\lambda b^+(C)$ for $\lambda\geq0$ and $b^+(\{-c;\ c\in C)\}=-b^-(C)$. This means that the previous operations defined on $\CR$ are the same as the ones coming from the vector space in $\mathcal E$: the abstract cone $\CR$ can actually be realized as a cone in $\mathcal E$. In particular $\CR$ is a convex subspace of $\mathcal E$.

 We endow $\CR$ with the induced topology, that is the coarsest topology such that all linear forms $b^+$ are continuous. Furthermore if $G$ acts by isometries on $E$, it also acts by linear homeomorphisms on $\mathcal{C}$. Indeed for $g\in G$ and $b\in E_1$, the map $C\mapsto b^+(gC)$ is $(bg)^+$.

If $C\in\CR$, we define $\Conv(C)=\{C'\in\CR;\ C'\subseteq C\}$ and $\CR_1=\Conv(E^*_1) $.

\begin{lemma}\label{compactness} If $C'\subseteq C$, then $\Conv(C')$ is a closed convex subset of $\Conv(C)$ and $C$ is an extreme point of $\Conv(C)$. Moreover $\CR_1 $ is compact  and if $E$ is separable then $\CR_1$ is metrizable.
\end{lemma}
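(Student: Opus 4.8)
The plan is to lean throughout on two facts: the Hahn--Banach separation statement that a nonempty weak*-compact convex $C\subseteq E^*$ equals the intersection of the weak*-closed half-spaces containing it, that is $C=\{c\in E^*:\ b(c)\le b^+(C)\text{ for every }b\in E_1\}$; and the additivity and positive homogeneity of $C\mapsto b^+(C)$ recorded just before the lemma, which say that $f$ is affine and that the $b^+$ are restrictions of continuous linear forms on $\mathcal{E}$.

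First I would dispatch the assertions about $\Conv(C)$. For closedness and convexity, observe that
\[\Conv(C')=\bigcap_{b\in E_1}\{D\in\CR:\ b^+(D)\le b^+(C')\}:\]
one inclusion is clear, and the reverse is exactly the Hahn--Banach fact applied to $C'$, since $b^+(D)\le b^+(C')$ for all $b\in E_1$ forces every $d\in D$ into $C'$. Being an intersection of preimages of closed half-lines under the continuous linear forms $b^+$, the set $\Conv(C')$ is thus closed and convex in $\CR$, hence in $\Conv(C)$. For the extreme point claim, if $C=\lambda C_1+(1-\lambda)C_2$ with $C_1,C_2\in\Conv(C)$ and $0<\lambda<1$, then applying $b^+$ gives $b^+(C)=\lambda b^+(C_1)+(1-\lambda)b^+(C_2)$ with $b^+(C_i)\le b^+(C)$ (as $C_i\subseteq C$), which forces $b^+(C_1)=b^+(C_2)=b^+(C)$ for every $b\in E_1$; injectivity of $f$ then yields $C_1=C_2=C$.

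For compactness I would work inside the image. If $C\in\CR_1$ and $b\in E_1$, then $b^+(C)\le\|b\|\le1$, and choosing any $c_0\in C$ gives $b^+(C)\ge b(c_0)\ge-1$, so $f(\CR_1)$ lies in the Tychonoff-compact cube $K=\prod_{b\in E_1}[-1,1]$; since $f$ is a homeomorphism onto its image, it suffices to show $f(\CR_1)$ is closed in $K$. Given $x=(x_b)$ in its closure, pick a net $C_\alpha\in\CR_1$ with $b^+(C_\alpha)\to x_b$ for all $b\in E_1$. The support functions $h_\alpha(b)=b^+(C_\alpha)$ are sublinear, $1$-Lipschitz, and dominated by $\|\cdot\|$ on $E$ (the Lipschitz bound uses $C_\alpha\subseteq E^*_1$), so by positive homogeneity they converge pointwise on all of $E$ to a function $h$ with the same three properties. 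Set $C=\{c\in E^*:\ b(c)\le h(b)\text{ for all }b\in E\}$: it is weak*-closed and convex, it is contained in $E^*_1$ (test $b$ of norm one), and it is nonempty because Hahn--Banach produces a linear functional dominated by the sublinear $h$, necessarily of norm $\le 1$. Moreover, for each fixed $b_0\in E_1$, dominating $h$ by a linear functional forced (using subadditivity of $h$) to agree with $h$ on $\R b_0$ yields $c\in C$ with $b_0(c)=h(b_0)$; hence $b_0^+(C)=h(b_0)=x_{b_0}$. Therefore $C\in\CR_1$ and $f(C)=x$, so $f(\CR_1)$ is closed and $\CR_1$ is compact.

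For metrizability when $E$ is separable, fix a countable dense set $\{b_n\}\subseteq E_1$. The uniform $1$-Lipschitz bound on the support functions permits an $\eps/3$ argument showing that the topology of $\CR_1$ (initial for all the $b^+$) already coincides with the initial topology for the countably many $b_n^+$; together with injectivity of $f$ this embeds $\CR_1$ into the metrizable cube $\prod_n[-1,1]$, so $\CR_1$ is metrizable. The delicate point in all of this is the closedness of $f(\CR_1)$ --- concretely, that the candidate limit set $C$ is nonempty with support function exactly $h$ --- which is where Hahn--Banach and the stability of sublinearity under limits do the real work; an alternative route to compactness identifies $\CR_1$ with the space of nonempty weak*-closed convex subsets of the weak*-compact (and, by separability, metrizable) ball $E^*_1$, which is closed in the compact metrizable Vietoris hyperspace of $E^*_1$, at the cost of invoking hyperspace theory.
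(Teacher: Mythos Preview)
Your proof is correct. For the closedness/convexity of $\Conv(C')$, the extreme point claim, and the metrizability under separability, your arguments match the paper's (the paper's $\eps/3$ inequality is exactly the $1$-Lipschitz estimate you invoke). The genuine divergence is in the compactness of $\CR_1$. The paper proceeds by a finite-to-infinite scheme: it first shows that for every finite $F\subset E_1$ there is $C_F\in\CR_1$ with $b^+(C_F)=t_b$ for all $b\in F$ (built as the convex hull of weak* limits of maximizers extracted from an approximating sequence), then replaces $C_F$ by the intersection $\bigcap_{b\in F}b^{-1}((-\infty,t_b])\cap E^*_1$ and takes the directed intersection $C=\bigcap_F C_F$, finally checking $b^+(C)=t_b$ via a net-compactness argument. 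You instead pass to the limit support function $h$ on all of $E$, observe it is sublinear and dominated by the norm, and recover $C$ as $\{c:\ c\le h\}$, using Hahn--Banach twice (once for nonemptiness, once to realize $h(b_0)$ as $b_0(c)$ for some $c\in C$). Your route is more conceptual and isolates the structural reason the image is closed --- sublinearity survives pointwise limits --- at the price of invoking Hahn--Banach extension; the paper's route is more self-contained and constructive but somewhat more intricate to verify. The Vietoris alternative you mention is also valid and is yet a third path.
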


\begin{proof}The first part comes from the fact that $C'\subseteq C$ if and only if for any $b\in E_1$, $b^+(C')\leq b^+(C)$. If $C',C''\in\Conv(C)$ and $C=(C'+C'')/2$ then for any $b\in E_1$, $b^+(C)=(b^+(C')+b^+(C''))/2$ and since $b^+(C'),\ b^+(C'')\leq b^+(C)$, one has $b^+(C')=b^+(C'')=b^+(C)$. Thus $C'=C''=C$. This proves $C$ is an extreme point of $\Conv(C)$.

To prove compactness, first observe that $f(\CR_1)\subseteq \prod_{b\in E_1}[-1,1]$. Let $t=(t_b)$ be a point in the closure of $f(\CR_1)$. We will show there is $C\in\CR_1$ such that $f(C)=t$.


\emph{Claim}: For all finite subset $F\subset E_1$, there is $C_F\in\CR_1$ such that for all $b\in F$, $b^+(C_F)=t_b.$\\

Assume the claim holds true. In that case, one can moreover assume that
\[C_F=\bigcap_{b\in F}b^{-1}\left((-\infty,t_b]\right)\cap E^*_1.\]
 With this assumption, observe that $F\subseteq F'$ implies $C_{F'}\subseteq C_F$. Now define $C=\cap_{F\in\mathcal{F}(E_1)}C_F$ where $\mathcal{F}(E_1)$ denotes the  set (directed for reverse inclusion order) of finite subsets of $E_1$. Fix $b\in E_1$.
 For $F\in\mathcal{F}(E_1)$, choose $c_F\in C_F$ such that $b(c_F)=b^+(C_F)$. As $E_1^*$ is compact, the net $(c_F)$ has a convergent subnet with limit $c$. Since $c_{F'}\in C_F$ for $F\subseteq F'$, the point $c$ is in $C$ and continuity implies $b(c)=b(c_F)=t_b$ for $F$ containing $b$. Since $C\subseteq C_F$, one has $b^+(C)\leq b^+(C_F)=t_b$. Thus $b^+(C)=t_b$ for all $b\in E_1$ and $f(C)=t$.

It remains to show the claim. Fix $F\in\mathcal{F}(E_1)$. There is a sequence $(C^k)$ with $C^k\in\CR_1$ such for all $b\in F$, $b^+(C^k)\to t_b$. For each $b\in F$ choose $c^k_b\in C^k$ such that $b(c^k_b)=b^+(C^k)$. Up to extraction one may assume that $c^k_b$ converges to some $c_b$ as $k\to\infty$. Thus for $b,b'\in F$, $b(c_{b'})=\lim_{k\to\infty}b(c^k_{b'})\leq \lim_{k\to\infty}b^+(C^k)=t_{b}$ and $b(c_b)=t_b$. Define $C_F$ to be the closed convex hull of $\{c_b\}_{b\in F}$. By construction $C_F$ satisfies the conditions of the claim.

For metrizability, let $(b_n)$ be a dense countable subset of $E_1$ and assume $(C_\alpha)$ is a net in $\CR_1$ such that $b_n^+(C_\alpha)\to b_n^+(C)$ for any $n\in\N$. We have
\begin{align*}|b^+(C)-b^+(C_\alpha)|&\leq|b^+(C)-b^+_n(C)|+|b^+_n(C)-b^+_n(C_\alpha)|+|b^+_n(C_\alpha)-b^+(C_\alpha)|\\
&\leq 2\|b_n-b\|+|b^+_n(C)-b^+_n(C_\alpha)|
\end{align*}
which shows that $|b^+(C)-b^+(C_\alpha)|\to 0$. Thus, the embedding of $f\colon\CR_1\to\prod_{n\in\N}\R_{b_n}$ yields the same topology as $\tau$ on $\CR_1$. This shows metrizability.
\end{proof}

\begin{remark}The proof of Lemma \ref{compactness} shows that $\CR_1$ embeds as a convex bounded subspace in $\ell^\infty(E_1)$. Since weak*-topology and the topology of pointwise convergence coincide on bounded subsets of $\ell^\infty(E_1)$, $\CR_1$ can be seen as a weak*-compact convex subset of $\ell^\infty(E_1)$ (seen as the dual of $\ell^1(E_1)$).

If $G$ acts continuously by linear isometries on $E$, then there are obvious linear isometric adjoint actions of $G$ on $\ell^1(E_1)$ and $\ell^\infty(E_1)$ that are not continuous. However the restriction of this action to $\CR_1$ and to the weak*-closure of its span $L$ is continuous. In particular $\CR_1$ is a general\footnote{We use the adjective \emph{general} to emphasize that, contrarily to Zimmer's original definition, a general convex compact $G$-space is merely a convex compact $G$-invariant subset of some locally convex vector with a continuous affine action of $G$ (not necessarily in the dual of some separable Banach space on which $G$ acts).} convex compact $G$-space. Moreover $L$ can be identified with the dual of some Banach space $L^\flat$ which can be realized as the Banach space quotient of $\ell^1(E_1)$ by the intersection of the kernels of all elements of $L$.
 \end{remark}

 The following Lemma is a key step in the proof of Theorem \ref{radrm}:

 \begin{lemma}\label{prob(conv(C))2}
 Assume $E$ is separable, $G$ acts by isometries on $E$ and  $C\in\CR_1$ is $G$-invariant. If there is no invariant closed convex proper subspace of $C$ then the only $G$-invariant Borel probability measure on $\CR(C)$ is $\delta_C$.
 \end{lemma}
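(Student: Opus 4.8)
The plan is to exploit the extreme-point structure established in Lemma~\ref{compactness} together with a barycenter argument. Let $\nu$ be a $G$-invariant Borel probability measure on $\CR(C)=\Conv(C)$. Since $\Conv(C)$ is a convex compact subspace of the locally convex space $\mathcal{E}$ (indeed of $\ell^\infty(E_1)$ after the identification in the remark), the barycenter $b=\ba(\nu)\in\Conv(C)$ is well-defined, and by uniqueness of the barycenter and $G$-invariance of $\nu$ it is a $G$-fixed point of $\Conv(C)$. Concretely $b$ is the element of $\CR_1$ whose support function is $b^+(\,\cdot\,)=\int_{\Conv(C)}b^+(C')\,d\nu(C')$ for every $b\in E_1$; this is a $G$-invariant member of $\CR(C)$, hence by hypothesis equals $C$ itself. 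So the barycenter of $\nu$ is $C$.

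Now I would argue that a measure whose barycenter is $C$ must be $\delta_C$, because $C$ is an extreme point of $\Conv(C)$ (Lemma~\ref{compactness}). In finite dimensions this is immediate; in our setting one needs the Bauer-type characterization: a point is extreme in a compact convex set $K$ of a locally convex space if and only if the only probability measure on $K$ with that barycenter is the Dirac mass there. I would either invoke this directly or prove the needed direction by hand: suppose $\nu\neq\delta_C$, so there is a weak*-closed set $A\subseteq\Conv(C)$ with $C\notin A$ and $0<\nu(A)=t<1$; write $\nu=t\nu_A+(1-t)\nu_{A^c}$ with $\nu_A,\nu_{A^c}$ the normalized restrictions, and let $b_A,b_{A^c}$ be their barycenters. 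Then $C=tb_A+(1-t)b_{A^c}$ with $b_A,b_{A^c}\in\Conv(C)$, contradicting extremality of $C$ unless $b_A=b_{A^c}=C$; one then rules out $b_A=C$ when $A$ is a small enough closed neighborhood complement missing $C$, using that $b^+$ separates points — if $b_A=C$ for all such $A$ then $\nu$ charges every neighborhood of $C$ with full mass, forcing $\nu=\delta_C$.

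I expect the main obstacle to be the verification that the barycenter exists as an element of $\CR(C)$ and is genuinely $G$-fixed, rather than merely a point of the ambient vector space $\mathcal{E}$ or $\ell^\infty(E_1)$. The point is that $\Conv(C)\subseteq\CR_1$ is compact (Lemma~\ref{compactness}) and the affine functionals $C'\mapsto b^+(C')$ are continuous, so the usual barycenter construction for Radon measures on a compact convex subset of a locally convex Hausdorff space applies and keeps the barycenter inside $\Conv(C)$; $G$-invariance of $\nu$ plus $G$-equivariance of each $b^+$ (namely $C'\mapsto b^+(gC')$ is $(bg)^+$, as noted before Lemma~\ref{compactness}) then gives $g\cdot\ba(\nu)=\ba(g_*\nu)=\ba(\nu)$. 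The extremality step is comparatively soft once one is willing to cite the Bauer characterization of extreme points, but writing it out via the decomposition argument above avoids appealing to Choquet theory and is the cleaner route given that Lemma~\ref{compactness} already hands us the extreme-point fact.
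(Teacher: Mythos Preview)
Your proposal is correct and follows essentially the same route as the paper: form the barycenter of $\nu$ in $\Conv(C)$, observe it is $G$-invariant, conclude it equals $C$ by minimality, and then deduce $\nu=\delta_C$. The only difference is in the final step: the paper argues directly that for each $b_n$ in a countable dense subset of $E_1$ one has $b_n^+(C')\leq b_n^+(C)$ with $\int b_n^+(C')\,d\nu(C')=b_n^+(C)$, forcing $b_n^+(C')=b_n^+(C)$ $\nu$-a.s., whence $C'=C$ a.s.; it then notes in a footnote that one may instead invoke extremality of $C$ as you do. Your Bauer-type decomposition argument is fine in spirit but a bit circuitous as written (the clean way to ``rule out $b_A=C$'' is exactly the integral inequality the paper uses), so the paper's direct computation is the more economical version of the same idea.
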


\begin{proof}
Let $\nu\in\Prob(\CR(C))^G$. Let $C_0$ be its barycenter, that is $C_0=\int C'\dd\nu(C')$. The integration process is the vector-valued integration in locally convex vector spaces (see e.g. \cite[Theorem 3.27]{MR1157815} or \cite[IV \S 7 N\textsuperscript{0} 2]{MR2018901}). Recall the barycenter is uniquely defined via the relation $\varphi\left(\int C'\dd\nu(C')\right)=\int \varphi(C')\dd\nu(C')$ for all $\varphi\in \mathcal{E}^*$. For $b^+\in E_1$ and $g\in G$,
\begin{equation*}b^+(gC_0)=b^+ g(C_0)=\int b^+ g(C)\dd\nu(C)=\int b^+(gC)\dd\nu(C)=\int b^+(C)\dd\nu(C)=b^+(C_0).\end{equation*}
Since $E_1$ separates points in $\Conv(C)$, $C_0$ is $G$-invariant and thus $C=C_0$ by minimality. Choose a dense countable set $(b_n)$ in $E_1$. Since for all $C'\subset C$ compact convex, $b_n^+(C')\leq b^+_n(C)$, the above equality implies that $\{b_n^+(C')=b^+_n(C);\ C'\in\CR(C)\}$ has measure one for all $n$, and hence $C'=C$ almost surely\footnote{One may also rely on the fact that $C$ is an extreme point and corollary \cite[IV \S 7 N\textsuperscript{0} 2]{MR2018901} tells us $\nu=\delta_C$.}. It follows that $\nu=\delta_C$.
\end{proof}

\section{Spanning IRSs}\label{span}The following lemma yields the existence of a minimal closed subgroup in which an IRS lies. The existence of such minimal subgroup was already proved in \cite{Hartman:2013yf}. This subgroup is called the \emph{normal closure} of the IRS. We include a proof for completeness.
\begin{lemma}\label{minsub}Let $\mu\in\IRS$. There exists a unique minimal closed subgroup $N\leq G$ such that $\mu(\mathcal{S}(N))=1$. This group is moreover normal.
\end{lemma}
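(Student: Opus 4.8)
The plan is to construct $N$ as an intersection and then check it has the required properties. First I would consider the collection $\mathcal{N} = \{ H \in \SG : \mu(\mathcal{S}(H)) = 1 \}$. This is non-empty since $G \in \mathcal{N}$. I would like to set $N = \bigcap_{H \in \mathcal{N}} H$ and show $N \in \mathcal{N}$; uniqueness and minimality are then immediate from the construction, since any closed subgroup $M$ with $\mu(\mathcal{S}(M))=1$ belongs to $\mathcal{N}$ and hence contains $N$. So the real content is the claim that $\mu(\mathcal{S}(N)) = 1$, i.e.\ that $\mathcal{N}$ is closed under (arbitrary) intersections.

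For this, the key point is that $\mathcal{S}(H_1) \cap \mathcal{S}(H_2) = \mathcal{S}(H_1 \cap H_2)$: a closed subgroup of $G$ is contained in $H_1 \cap H_2$ iff it is contained in both. Hence $\mathcal{N}$ is closed under finite intersections. To pass to arbitrary intersections I would use separability: since $G$ is second countable, $\SG$ is second countable, and one can extract from $\mathcal{N}$ a countable subfamily $(H_n)$ with $\bigcap_n H_n = \bigcap_{H\in\mathcal{N}} H = N$ (for instance, because $\SG$ with the Chabauty topology is second countable and metrizable, the family of closed sets $\{\mathcal{S}(H) : H \in \mathcal{N}\}$ has a countable subfamily with the same intersection; alternatively realize $N$ as a decreasing limit of finite intersections using a countable base). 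Then $\mu(\mathcal{S}(N)) = \mu\!\left(\bigcap_n \mathcal{S}(H_n)\right) = 1$ by countable subadditivity, since each $\mathcal{S}(H_n)$ has full measure. This gives $N \in \mathcal{N}$.

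It remains to prove $N$ is normal. Here I would use conjugation-invariance of $\mu$. For any $g \in G$, the set $\mathcal{S}(gNg^{-1}) = g \cdot \mathcal{S}(N)$ also has $\mu$-measure $1$ because $\mu$ is invariant under conjugation by $g$; hence $gNg^{-1} \in \mathcal{N}$, and by minimality $N \subseteq gNg^{-1}$ for all $g$, which (applying this also to $g^{-1}$) forces $gNg^{-1} = N$. Thus $N$ is normal.

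The main obstacle is the reduction from an arbitrary intersection to a countable one; everything else is formal. I expect this to be handled cleanly via second countability of $\SG$: the Chabauty space is compact metrizable, so its topology is second countable, and a closed subset arising as the intersection of a family of closed sets of the form $\mathcal{S}(H)$ can be written as the intersection of a countable subfamily (pick, for each point not in $N$, a basic open set separating it, and use a countable base). Once that is in place, countable subadditivity of $\mu$ finishes the measure computation. One subtlety to verify is that $\mathcal{S}(H)$ is indeed Chabauty-closed in $\SG$ for each closed $H$, which is standard (it is the set of $K \in \SG$ with $K \cap U = \emptyset$ for every open $U$ disjoint from $H$, i.e.\ an intersection of sets $O_U$).
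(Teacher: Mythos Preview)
Your proposal is correct and follows essentially the same approach as the paper: define $N$ as the intersection of all closed subgroups of full measure, then use second countability to reduce to a countable subfamily so that $\mathcal{S}(N)$ is a countable intersection of full-measure sets. The only cosmetic difference is that the paper carries out the countable extraction concretely in $G$ via a metric (covering $G\setminus N$ by countably many balls and choosing, for each ball, a full-measure subgroup avoiding it), whereas you invoke the Lindel\"of property more abstractly; your explicit verification of normality via conjugation-invariance is exactly what the paper deems ``immediate.''
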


\begin{proof}
Let $\mathcal{H}$ be the family of all closed subgroups $H$ such that $\mu(\mathcal{S}(H))=1$. We define $H_0$ to be $\bigcap_{H\in\mathcal{H}}H$.
Normality, closeness and uniqueness are immediate consequences of the definition, thus it suffices to prove that $\mu(\mathcal{S}(H_0))=1$.

Choose a countable subset $\{g_n\}_{n\in\N}$ of the open subset $G\setminus H_0$ such that $G\setminus H_0=\bigcup_{n\in\N} B(g_n,\rho_n)$ where $\rho_n=d(g_n,H_0)/2$ . For $n\in\N$, choose $H_{n}\in\mathcal{H}$ such that $H_{n}\cap \overline{B}(g_n,\rho_n)=\emptyset$. We have $H_0=\bigcap_{n\in\N}H_{n}$ and thus $\mathcal{S}\left(H_0\right)=\bigcap_{n\in\N}\mathcal{S}\left(H_{n}\right)$. The latter intersection being countable, we have $\mu(\mathcal{S}(H_0))=1$.
\end{proof}

\begin{definition}An IRS on $G$ is \emph{spanning} if $G$ is the normal closure of the IRS.\end{definition}

To illustrate this definition, one can reformulate \cite[Corollary 1.2]{Biringer:2014fr} in the following way: If $G$ has a spanning unimodular IRS then $G$ is unimodular itself. If $\mu\in\IRS$ is spanning, it is proved in  \cite{Hartman:2013yf} that for any $S\subseteq\SG$ with $\mu(S)=1$, $G=\overline{\langle\cup_{H\in S}H\rangle}$. Since an IRS of $G$ is also an IRS of its normal closure, considering spanning IRSs is a natural reduction to prove results on IRS. One has to be careful with relative properties since an IRS may have a property relatively to $G$ and this property may fails relatively to the normal closure (for example think to a normal subgroup with relative property (T) which does not have property (T)).

The following lemma is close to the \emph{locally essential lemma} \cite[Lemma 2.2]{Gelander:2014kq}.

\begin{lemma}\label{positiv}Assume $G$ is countable and let $\mu$ be an IRS. The normal closure $N$ of $\mu$ coincides with the subgroup generated by $\{h\in G;\ \mu\left(\{H\in\SG;\ h\in H\}\right)>0\}$.
\end{lemma}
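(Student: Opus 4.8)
Lemma \ref{positiv} asserts: for countable $G$ and an IRS $\mu$, the normal closure $N$ equals the subgroup $N_0$ generated by $P = \{h\in G;\ \mu(\{H\in\SG;\ h\in H\})>0\}$.

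The plan is to prove the two inclusions $N_0 \subseteq N$ and $N \subseteq N_0$ separately, using the characterization of $N$ from Lemma \ref{minsub} as the unique minimal closed subgroup with $\mu(\mathcal{S}(N))=1$. First I would observe that since $G$ is countable, every subgroup is closed, so "closed subgroup" just means "subgroup", and the Chabauty topology is the topology of pointwise convergence of indicator functions; in particular, for each fixed $h \in G$, the set $\{H\in\SG;\ h\in H\}$ is clopen (it is $O^{\{h\}}$ intersected with nothing, or rather it is both of the form $O^K$ with $K=\{h\}$ and the complement of such a set since $\{h\}$ is open). This measurability is what makes $P$ well-defined.

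For the inclusion $N_0 \subseteq N$: I want to show every $h \in P$ lies in $N$. Since $\mu(\mathcal{S}(N)) = 1$, for $\mu$-almost every $H$ we have $H \subseteq N$. If $h \in P$, then with positive probability $h \in H$, and combined with $H \subseteq N$ a.s., we get that with positive probability $h \in H \subseteq N$, hence $h \in N$. This gives $P \subseteq N$, and since $N$ is a subgroup, $N_0 = \langle P \rangle \subseteq N$.

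For the reverse inclusion $N \subseteq N_0$: by minimality of $N$ (Lemma \ref{minsub}), it suffices to show $N_0$ is a closed subgroup — automatic since $G$ is countable — with $\mu(\mathcal{S}(N_0)) = 1$, i.e. that $\mu$-almost every $H$ satisfies $H \subseteq N_0$. Equivalently, I must show that for $\mu$-a.e. $H$, every element $h \in H$ lies in $N_0$. Consider the complement: the set of $H$ containing some element outside $N_0$. Since $G$ is countable, this is $\bigcup_{h \notin N_0} \{H;\ h \in H\}$, a countable union. For each $h \notin N_0$ we have in particular $h \notin P$ (since $P \subseteq N_0$), so by definition of $P$, $\mu(\{H;\ h\in H\}) = 0$. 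By countable subadditivity, the union has measure zero, so $\mu(\mathcal{S}(N_0)) = \mu(\{H;\ H \subseteq N_0\}) = 1$. Then minimality forces $N \subseteq N_0$, completing the proof.

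The only subtlety — and it is the single point deserving care rather than a genuine obstacle — is the measurability and the use of countability: the argument crucially uses that the union $\bigcup_{h\notin N_0}\{H;\ h\in H\}$ is countable so that a null set is obtained, and that each $\{H;\ h\in H\}$ is Borel (indeed clopen) in the Chabauty topology. Both are immediate from $G$ being countable discrete, which is exactly the hypothesis. No appeal to the locally essential lemma of \cite{Gelander:2014kq} is needed; the statement is a clean countable-subadditivity argument hinging on Lemma \ref{minsub}.
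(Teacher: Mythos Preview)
Your proof is correct and follows essentially the same route as the paper's: both directions are argued exactly as you do, using $\mu(\mathcal{S}(N))=1$ to get $P\subseteq N$ (the paper phrases this as the contrapositive) and countable subadditivity over $g\notin N_0$ to get $\mu(\mathcal{S}(N_0))=1$ and hence $N\subseteq N_0$ by minimality.
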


\begin{proof} For $h\in G$, denote by $S_h=\{H\in\SG;\ h\in H\}$. Let $N_0$ be the subgroup of $G$ generated by $\{h\in G;\ \mu(S_h)>0\}$ and let $N$ be the normal closure of $\mu$. We aim to prove that $N=N_0$.  Let $S$ be the complement of $\cup_{g\in G\setminus N_0}S_g$ in $\SG$. We have $\mu(S)=1$ and $S=\mathcal{S}(N_0)$. In particular $N\leq N_0$. If $h\notin N$ then $S_h\cap \mathcal{S}(N)=\emptyset$ and $\mu(S_h)=0$. Thus, if $\mu(S_h)>0$ then $h\in N$. This shows that $N_0\leq N$.
\end{proof}

\section{Amenable IRSs}\label{amenirs}

Amenability has many equivalent definitions. We use the following one (which appears in \cite[4.1.4]{MR776417} for example). Let $H$ be a topological group. A \emph{convex compact H-space} $C$ is a $H$-invariant convex weak*-compact subspace of the unit ball of the dual of a separable Banach space on which $H$ acts continuously by isometries. A topological group $H$ is \emph{amenable} if every non-empty convex compact $H$-space contains a $H$-fixed point.

An important and open question about the space of amenable subgroups $\ASG$ is to decide whether it is closed in $\SG$. This question was investigated in \cite{Caprace:2013kq}, in which the authors decided to introduce a weaker notion of amenability for closed subgroups.  A subgroup $H\in\SG$ is \emph{relatively amenable} if for any non-empty convex compact $G$-space there is a $H$-fixed point.

It is easy to prove that the space of relatively amenable subgroups $\RASG$ is closed in $\SG$ \cite[Lemma 18]{Caprace:2013kq}. Of course, $\ASG\subseteq\RASG$ and it is an open question to decide whether it is actually an equality. The group $G$ is said to belong to the class $\mathscr{X}$ if there is equality. This class $\mathscr{X}$ is quite large since it contains, for example, discrete groups, connected groups, algebraic groups over local fields, groups amenable at infinity, and is stable under some natural extension processes \cite[ Theorem 2]{Caprace:2013kq}.

\begin{proof}[Proof of Theorem \ref{radrm}]
We fix a locally compact second countable group $G$ and an IRS $\mu$ satisfying $\mu(\RASG)=1$.
We let $N$ be the normal closure of $\mu$.
We will argue to show that $N$ is amenable, hence being normal, it is contained in the amenable radical of $G$.
In fact, we will show that $N$ is relatively amenable in $G$, and use the easy fact that normal subgroups are amenable iff they are relatively amenable (see e.g \cite[Proposition~3]{Caprace:2013kq}).
That is, we need to show that every convex compact $G$-space has an $N$-fixed point.
We fix such a $G$-space, $C$ --- a $G$-invariant convex weak*-compact subspace of the unit ball of a dual of a separable Banach space $E$,
on which $G$ acts continuously by isometries.
Without loss of generality (by Zorn lemma and a compactness argument) we assume as we may that $C$ has no proper $G$-invariant closed convex subset.
Let $K\lhd G$ be the kernel of the action on $C$.
We claim that for $\mu$-a.e $H\in \mathcal{G}$, $H<K$.
The proof of the theorem follows from the claim: by the definition of $N$, $\mu(\mathcal{S}(K))=1$ implies $N<K$ and $C$ is $N$-fixed.


\begin{lemma}\label{borelness} The map $H\mapsto \Fix(H)$ from $\RASG$ to $\Conv(C)$ is Borel and $G$-equivariant.
\end{lemma}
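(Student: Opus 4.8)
The plan is to exhibit $H \mapsto \Fix(H)$ as a measurable map into the compact metrizable space $\Conv(C)$ (with the topology coming from the embedding $f \colon \CR_1 \to \prod_{b} \R_b$, cf. Lemma \ref{compactness}) by writing it as a countable lattice operation on a family of manifestly measurable maps. First I would note equivariance, which is immediate: for $g \in G$ and $H \in \RASG$ one has $g \cdot \Fix(H) = \Fix(gHg^{-1})$ since $g$ acts on $C$ by a linear homeomorphism, and the adjoint action of $G$ on $\RASG$ is by conjugation; so the content of the lemma is Borelness.

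For measurability, the key observation is that $\Fix(H) = \bigcap_{h \in H} \Fix(h)$, and that $\Fix(h) = \{c \in C : hc = c\}$ is a closed convex (possibly empty — but here nonempty since $H$ is relatively amenable) subset of $C$, i.e. an element of $\Conv(C)$. The natural approach is then a two-step reduction. First, reduce the intersection over $H$ to a countable one: fix a countable dense subgroup or, more carefully, a countable dense subset $D = \{g_n\}$ of $G$; since the action $G \curvearrowright C$ is continuous and $\Fix(H)$ is closed, one has $\Fix(H) = \bigcap_{n : g_n \in H} \Fix(g_n)$, because $h \mapsto \Fix(h)$ is "upper semicontinuous" in the sense that if $g_{n_k} \to h$ with $g_{n_k} \in H$ then $\Fix(h) \supseteq \bigcap_k \Fix(g_{n_k})$, and $H = \overline{H \cap D}$ fails in general — so instead I would use that $H \cap D$ is dense in $H$ only when $H$ is the closure of a subgroup it generates; the clean way is to pick, for each $H$, the intersection over all $g \in G$ with $g \in H$, and show this equals the intersection over a fixed countable dense set of $G$ intersected with $H$, using continuity of the action to pass from a dense subset of $H$ to all of $H$. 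Second, I would show that $g \mapsto \Fix(g)$ is a Borel map $G \to \Conv(C)$ (in fact continuous on the locus where it is nonempty, or at least Borel), that $\{(H, g) : g \in H\}$ is a Borel subset of $\RASG \times G$ (this is standard for the Chabauty topology), and that the "intersection" operation $\Conv(C) \times \Conv(C) \to \Conv(C)$, $(A, B) \mapsto A \cap B$, is Borel — here one uses that in the embedding $f$, $b^+(A \cap B)$ is an upper-semicontinuous, hence Borel, function of $(A,B)$, being $\inf$ over finitely-supported supporting-hyperplane data, combined with metrizability of $\CR_1$ from Lemma \ref{compactness}. Assembling: $\Fix(H) = \bigcap_n \big( \Fix(g_n) \text{ if } g_n \in H, \text{ else } C \big)$ is a countable Borel intersection of Borel functions of $H$, hence Borel.

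The main obstacle I anticipate is the measurability of the binary intersection map on $\Conv(C)$, equivalently controlling $b^+(A \cap B)$: unlike $b^+(A) + b^+(B)$ or $\lambda b^+(A)$, the support function of an intersection is not a continuous function of the support functions of the pieces, only semicontinuous, and one must be careful that $A \cap B$ can drop dimension or that the natural formula $b^+(A\cap B) = \inf_{b_1 + b_2 = b} (b_1^+(A) + b_2^+(B))$ (an infimum over a possibly uncountable set, which one restricts to a countable dense set using separability of $E$) genuinely computes the support function — this requires a Hahn–Banach separation argument inside $C$. A secondary technical point is verifying that $\{(H,g) \in \RASG \times G : g \in H\}$ is Borel and that the countable dense set $D \subset G$ can be chosen so that $\Fix(H) = \bigcap_{g_n \in H} \Fix(g_n)$ for every $H$ simultaneously; for this I would invoke that $C$ is a continuous $G$-space, so $g \mapsto \Fix(g)$ is continuous in the appropriate sense and the fixed-point set of a dense subset of $H$ already cuts out $\Fix(H)$.
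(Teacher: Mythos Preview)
Your approach takes a substantial detour and contains a gap you yourself flag but do not close. The formula $\Fix(H)=\bigcap_{g_n\in H}\Fix(g_n)$ with a \emph{fixed} countable dense set $D=\{g_n\}\subset G$ simply fails: $D\cap H$ need not be dense in $H$ (e.g.\ $G=\R$, $D=\Q$, $H=\sqrt{2}\,\Z$, where $D\cap H=\{0\}$ and your intersection returns all of $C$). Continuity of the action does not rescue this, since for $g_n\to h$ one only gets $\bigcap\Fix(g_n)\subseteq\Fix(h)$, not the inclusion you need. The honest fix is to replace the fixed dense set by Borel selectors $d_i\colon\SG\to G$ (Kuratowski--Ryll-Nardzewski) with $\{d_i(H)\}$ dense in $H$, after which $\Fix(H)=\bigcap_i\Fix(d_i(H))$ and one still has to check Borelness of the countable intersection map on $\Conv(C)$ --- doable, but this is a lot of machinery.

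The paper bypasses all of this. Since the topology on $\Conv(C)$ is generated by the countably many coordinate maps $b_n^+$ (Lemma~\ref{compactness}), it suffices to show that each $H\mapsto b_n^+(\Fix(H))$ is Borel; the paper shows directly that it is upper semicontinuous. Given $H_k\to H$ in $\RASG$, pick $c_k\in\Fix(H_k)$ realizing $b_n^+(\Fix(H_k))$, pass to a weak*-convergent subsequence $c_k\to c$, and use continuity of the $G$-action together with the sequential description of Chabauty convergence to see $c\in\Fix(H)$; then $b_n^+(\Fix(H))\geq b_n(c)=\overline{\lim}\,b_n^+(\Fix(H_k))$. This three-line argument avoids any discussion of dense subsets of $H$, the measurability of the intersection map, or support-function formulas for $A\cap B$.
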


\begin{proof} Choose a countable dense subset $(b_n)$ of $E_1$ as in the proof of Lemma \ref{compactness}. Since the $b^+_n$ are countable and define the topology, it suffices to prove that for any $n\in\N$, $H\mapsto b^+_n(\Fix(H))$ is Borel. We actually prove that $H\mapsto b^+_n(\Fix(H))$ is upper semi-continuous. Fix a sequence $H_n$ converging to $H$ in $\RASG$ and choose $c_k\in\Fix(H_k)$ such that $b(c_k)=b^+_n(\Fix(H_k))$. Let $c$ be a limit point (up to extraction) of $(c_k)$. Since the action $G\action C$ is continuous, $c\in \Fix(H)$ and since $b_n$ is continuous, one has $b_n(c)=\lim_{k\to\infty} b_n(c_k)$. Thus $b_n^+(\Fix(H))\geq \overline{\lim}\ b^+_n(\Fix(H_k))$. The $G$-equivariance is clear.
\end{proof}

Denote the image of $\mu$ under $H\mapsto\Fix(H)$ by $\nu$. Clearly, $\nu$ is a $G$-invariant Borel probability measure on the compact metrizable space $\Conv(C)$. By Lemma \ref{prob(conv(C))2}, it follows that $\nu=\delta_C$, meaning that $\mu$-almost every $H\in\SG$ fixes every point of $C$, and the claim is proven.
\end{proof}


\section{Kazhdan Property (T)}\label{kazhdan}
For a unitary representation $\pi$ of $G$, we denote by $Z^1(G,\pi)$ the space of 1-cocycles, by $B^1(G,\pi)$ the space of coboundaries and by $\overline{H^1}(G,\pi)=Z^1(G,\pi)/\overline{B^1(G,\pi)}$ the first reduced cohomology associated to $\pi$. We refer to \cite{MR2415834} for standard facts about those objects. We recall the following important theorem due to Y. Shalom \cite[Theorem 6.1]{MR1767270}, see also \cite[Theorem 3.2.1]{MR2415834}.

\begin{theorem}Assume $G$ is compactly generated. The group $G$ has property (T) if and only if for any irreducible unitary representation $\pi$, $\overline{H^1}(G,\pi)=0$.\end{theorem}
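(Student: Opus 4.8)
This is a theorem of Shalom \cite[Theorem~6.1]{MR1767270} (see also \cite[Chapter~3]{MR2415834}); I outline how I would organize its proof. \emph{The forward implication} is soft: a compactly generated group is $\sigma$-compact, so by the Delorme--Guichardet theorem property (T) is equivalent to property (FH), i.e.\ every affine isometric action of $G$ on a real Hilbert space has a fixed point. Applied to the affine actions with a prescribed linear part $\pi$, this gives $H^1(G,\pi)=0$ for every unitary representation $\pi$; since $\overline{H^1}(G,\pi)$ is a quotient of $H^1(G,\pi)$, it too vanishes, in particular for irreducible $\pi$.

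\emph{The converse} is the substantive direction: assume $\overline{H^1}(G,\pi)=0$ for every irreducible unitary representation $\pi$. The first step removes the irreducibility hypothesis. Every unitary representation $\pi$ of $G$ decomposes as a direct integral $\pi=\int_X^{\oplus}\pi_x\,d\mu(x)$ of irreducible representations, and a cocycle $b\in Z^1(G,\pi)$ disintegrates into a measurable field $x\mapsto b_x\in Z^1(G,\pi_x)$. The crucial input --- and the place where compact generation of $G$ is essential --- is that reduced first cohomology is \emph{local} for such decompositions: if $\overline{H^1}(G,\pi_x)=0$ for $\mu$-almost every $x$, then $\overline{H^1}(G,\pi)=0$. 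Granting this, the hypothesis yields $\overline{H^1}(G,\pi)=0$ for \emph{every} unitary representation $\pi$.

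The second step deduces property (T) from this universal vanishing together with compact generation. Suppose $G$ does not have (T) and fix a compact generating set $Q$. Then there are unitary representations $(\pi_n,\mathcal H_n)$ with no nonzero invariant vectors and unit vectors $\xi_n$ with $c_n:=\sup_{g\in Q}\|\pi_n(g)\xi_n-\xi_n\|\to 0$ (note $c_n>0$, as otherwise $\xi_n$ would be $G$-invariant). Rescaling the coboundaries to $b_n(g)=c_n^{-1}(\pi_n(g)\xi_n-\xi_n)$, which have supremum $1$ over $Q$ and hence are bounded by the $Q$-word length on all of $G$, one combines the $(\pi_n,b_n)$ through an ultraproduct construction along a non-principal ultrafilter (with the usual care to ensure strong continuity) into a genuine cocycle $b_\infty\in Z^1(G,\pi_\infty)$ over a unitary representation $\pi_\infty$. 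The normalization is arranged precisely so that $b_\infty$ is non-trivial \emph{and} the affine action it defines has no almost-fixed point uniformly over $Q$; this is exactly the statement $b_\infty\notin\overline{B^1(G,\pi_\infty)}$, i.e.\ $\overline{H^1}(G,\pi_\infty)\neq 0$, contradicting the first step. Hence $G$ has property (T).

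\emph{Main obstacle.} The heart of the proof is the localization statement of the first step. Passing from ``every fibre $\pi_x$ has vanishing reduced cohomology'' to ``the integral $\pi$ does'' requires assembling, out of the pointwise approximations of $b_x$ by coboundaries of $\pi_x$, a single square-integrable section $x\mapsto v_x$ whose coboundary approximates $b$ uniformly on $Q$; the difficulty is that the norms $\|v_x\|$ are a priori unbounded in $x$, and controlling them --- discarding, at small $L^2$-cost, the set where $\sup_{g\in Q}\|b_x(g)\|$ is large, and invoking measurable selection on the rest --- is the technical core. Compact generation is genuinely needed here: for a group that is not compactly generated the statement fails, as one sees for $\bigoplus_n\Z/2\Z$, all of whose irreducible (one-dimensional) representations have vanishing reduced cohomology although the group is not Kazhdan. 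Getting the normalization right in the ultraproduct step, so that $b_\infty$ is nonzero in \emph{reduced} rather than merely unreduced cohomology, is the other delicate point, though by now a standard part of the circle of ideas around the Delorme--Guichardet theorem.
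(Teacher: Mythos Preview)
The paper does not supply a proof of this statement at all: it is quoted verbatim as a theorem of Shalom, with citations to \cite[Theorem~6.1]{MR1767270} and \cite[Theorem~3.2.1]{MR2415834}, and is then used as a black box in the proof of Theorem~\ref{(T)}. There is thus nothing in the paper to compare your argument against.

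That said, your outline is a reasonable account of the standard proof appearing in those references. The forward direction via Delorme--Guichardet is exactly right. For the converse, the two-step structure you describe---first passing from irreducible to arbitrary representations via direct-integral localisation of reduced cohomology, then producing from the failure of~(T) a single cocycle that is not an almost coboundary---is the architecture of Shalom's argument, and your identification of the localisation step as the technical heart, together with the $\bigoplus_n\Z/2\Z$ example showing compact generation cannot be dropped, is on point. One small caution regarding the second step: the claim that the rescaled ultraproduct cocycle $b_\infty$ lies outside $\overline{B^1(G,\pi_\infty)}$ does not follow from the normalisation $\sup_Q\|b_n/c_n\|=1$ alone; one must also arrange that $c_n$ is (close to) the \emph{infimum} of $\sup_Q\|\pi_n(g)v-v\|$ over unit vectors $v$, or use an equivalent device, so that no bounded sequence of vectors can produce an almost-fixed point in the limit. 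This is handled carefully in the references you cite, but your sketch glosses over it.
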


We will use the following characterization of relative property (T) that holds for locally compact second countable groups.

\begin{proposition}[{\cite{MR2154620}}] Let $H\in\SG$. The pair $(G,H)$ has relative property (T) if and only if for any continuous isometric action of $G$ on a Hilbert space there are $H$-fixed points.
\end{proposition}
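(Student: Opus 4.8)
The plan is to read the statement as a \emph{relative} version of the Delorme--Guichardet theorem. A continuous isometric action of $G$ on a Hilbert space $\mathcal H$ should be understood as an affine isometric action, encoded by its linear part $\pi$ (a continuous unitary representation) together with a continuous $1$-cocycle $b\in Z^1(G,\pi)$, the action being $g\cdot v=\pi(g)v+b(g)$. An $H$-fixed point exists exactly when $b|_H\in B^1(H,\pi|_H)$, equivalently when the $H$-orbit $\{b(h):h\in H\}$ is bounded, since a bounded subset of a Hilbert space is contained in a unique closed ball of minimal radius, whose center is then fixed by $H$. Thus the equivalence to prove becomes: $(G,H)$ has relative property (T) if and only if for every unitary representation $\pi$ of $G$ every cocycle $b\in Z^1(G,\pi)$ has bounded restriction to $H$.

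For the direction relative (T) $\Rightarrow$ $H$-fixed points, I would start from $b\in Z^1(G,\pi)$ and form the continuous conditionally negative definite function $\psi(g)=\|b(g)\|^2$. By Schoenberg's theorem each $\varphi_t=e^{-t\psi}$ ($t>0$) is positive definite, and its GNS triple $(\pi_t,\mathcal H_t,\xi_t)$ satisfies $\|\pi_t(g)\xi_t-\xi_t\|^2=2(1-e^{-t\psi(g)})$. As $t\to 0^+$ this tends to $0$ uniformly on compact subsets of $G$ because $\psi$ is continuous, so $(\xi_t)$ is a net of almost invariant vectors. The standard quantitative form of relative (T) — that almost invariant vectors are \emph{uniformly} almost $H$-invariant — then gives $\sup_{h\in H}2(1-e^{-t\psi(h)})\to 0$ as $t\to 0^+$. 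Were $\psi$ unbounded on $H$ this supremum would equal $1$ for every $t$; hence $\psi|_H$ is bounded, the $H$-orbit of $b$ is bounded, and an $H$-fixed point exists.

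For the converse I would argue by contraposition and run the relative Delorme construction. If $(G,H)$ fails relative (T), then for an exhausting sequence of compact sets $Q_n\uparrow G$ and $\varepsilon_n\downarrow 0$ there exist unitary representations $\pi_n$ with unit vectors $\xi_n$ that are $(Q_n,\varepsilon_n)$-invariant, yet that are moved a definite amount by $H$; quantitatively one extracts $h_n\in H$ with $\|\pi_n(h_n)\xi_n-\xi_n\|$ bounded below. On $\pi=\bigoplus_n\pi_n$ I would set $b(g)=(\sqrt{t_n}\,(\pi_n(g)\xi_n-\xi_n))_n$ for suitable weights $t_n>0$. The cocycle relation is immediate from the coboundary form of each coordinate, and choosing the $t_n$ so that $\sum_n t_n\varepsilon_n^2<\infty$ ensures, via $\|\pi_n(g)\xi_n-\xi_n\|\le\varepsilon_n$ once $g\in Q_n$, that $b(g)\in\mathcal H$ for every $g$ and that $b$ is continuous; meanwhile $\sup_{h\in H}\|b(h)\|^2\ge\sup_n t_n\|\pi_n(h_n)\xi_n-\xi_n\|^2=\infty$. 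The associated affine action then has unbounded, hence fixed-point-free, $H$-orbit, contradicting the hypothesis.

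The main obstacle is exactly this last balancing act in the Delorme direction: reconciling the summability $\sum_n t_n\varepsilon_n^2<\infty$ (needed so that each $b(g)$ lies in $\mathcal H$ and $g\mapsto b(g)$ is continuous) with the divergence of $\sup_n t_n\|\pi_n(h_n)\xi_n-\xi_n\|^2$ over $H$. This forces a careful extraction of the representations $\pi_n$ from the failure of relative (T), so that the $\xi_n$ are increasingly invariant under the growing sets $Q_n$ but remain displaced by a fixed amount under $H$, together with a verification that the cocycle is genuinely continuous; the book-keeping here is where the real work lies. The forward direction, by contrast, is the soft Schoenberg--Guichardet half and presents no serious difficulty.
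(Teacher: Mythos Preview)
The paper does not give its own proof of this proposition: it is stated with a citation to Cornulier \cite{MR2154620} and used as a black box. So there is no ``paper's proof'' to compare against; your sketch is in fact a reconstruction of the relative Delorme--Guichardet theorem, which is precisely the content of the cited reference.

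Your outline is essentially correct and is the standard route. The Guichardet half (relative~(T) $\Rightarrow$ $H$-fixed points via Schoenberg and the GNS representations of $e^{-t\psi}$) is fine as written. In the Delorme half, the one point you gloss over that deserves a sentence is the extraction of $h_n\in H$ with $\|\pi_n(h_n)\xi_n-\xi_n\|$ bounded below by a fixed constant. The mere negation of relative~(T) gives representations with almost invariant vectors and \emph{no} $H$-invariant vector, which a priori does not prevent the $\xi_n$ from being arbitrarily $H$-almost-invariant. One uses the quantitative reformulation (also in \cite{MR2154620}, and recalled in the paper just after this proposition): the failure of relative~(T) yields a fixed $\delta>0$ such that for every $(Q,\varepsilon)$ there is a $(Q,\varepsilon)$-invariant unit vector with no $H$-invariant vector within distance~$\delta$; a short argument (e.g.\ projecting to the orthogonal complement of $H$-invariants, or the $\sqrt{2}$-trick) then gives a uniform lower bound on $\sup_{h\in H}\|\pi_n(h)\xi_n-\xi_n\|$. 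With that in hand, your balancing of the weights $t_n$ goes through. So your proposal is sound, modulo making this extraction step explicit; it simply reproduces the cited result rather than anything the paper itself proves.
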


We observe that relative property (T) is actually a Borel property and thus  $\mathcal{S}_{r(T)}(G)$ is measurable for any IRS.

\begin{lemma} The subset $\mathcal{S}_{r(T)}(G)$ is a Borel subset of $\SG$.
\end{lemma}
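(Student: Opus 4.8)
The plan is to reduce the statement to the Borel measurability of a family of maps associated with a countable dense subgroup of $G$. Since $G$ is locally compact second countable, fix a countable dense subset $D=\{g_n\}_{n\in\N}$ of $G$; and fix, once and for all, a sequence $(\pi_k,\mathcal H_k)$ of continuous unitary representations of $G$ which is "universal" for almost-invariant vectors in the sense relevant to relative property (T) — for instance the sequence obtained by letting $\pi_k$ run over a countable family that is weakly dense in the Fell topology, or more concretely the cyclic subrepresentations generated by a countable dense set of vectors inside a single representation weakly containing all of them. The point is that by the Delorme–Guichardet-type characterization quoted just above (the cited \cite{MR2154620}), $(G,H)$ has relative property (T) if and only if there are $H$-fixed points for every continuous affine isometric action of $G$ on a Hilbert space, equivalently if and only if there is no sequence of unit vectors in such representations that is asymptotically $H$-invariant but has no nonzero $H$-invariant vector; and this can be tested on a countable family.

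First I would make the logical structure explicit. Following the standard reformulation, $H\notin\mathcal S_{r(T)}(G)$ means: there exist a continuous unitary representation $\pi$ of $G$ and a sequence of unit vectors $(v_m)$ with $\|\pi(h)v_m-v_m\|\to 0$ for all $h\in H$, while $\pi$ has no nonzero $H$-invariant vector — or, in the affine picture, there is a continuous affine isometric $G$-action with no $H$-fixed point. The key reduction is that it suffices to range $\pi$ over a fixed countable list $(\pi_k)$ and to detect the absence of invariant vectors and the presence of almost-invariant ones using only the countable dense set $D$, because $h\mapsto \pi(h)v$ is continuous and $H$ is a closed subgroup, so $\sup_{h\in H}\|\pi(h)v-v\| = \sup_{n:\, g_n\in H}\|\pi(g_n)v-v\|$, and the condition "$g_n\in H$" is open-and-closed data read off from the Chabauty topology.

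Next I would carry out the measurability bookkeeping. For each $k$ and each $n$, the function $H\mapsto \mathbf 1[g_n\in H]$ is Borel on $\SG$ (indeed the set $\{H: g_n\in H\}$ is closed, being $\SG$ minus an open condition; cf. the sets $O_U$ in the definition of the Chabauty topology). For a fixed dense sequence of vectors $(v_j^{(k)})$ in $\mathcal H_k$, the quantity
\[
F_k(H) \;=\; \inf_{j}\ \sup_{n:\, g_n\in H}\ \bigl\|\pi_k(g_n)v_j^{(k)} - v_j^{(k)}\bigr\|
\]
is, for each fixed $j$, a countable supremum over the index set $\{n: g_n\in H\}$ of the continuous functions $H\mapsto \|\pi_k(g_n)v_j^{(k)}-v_j^{(k)}\|\cdot\mathbf 1[g_n\in H]$, hence Borel in $H$; taking the countable infimum over $j$ keeps it Borel. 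One checks that $H$ has almost-invariant unit vectors in $\pi_k$ (relative to $H$) precisely when the analogous expression, taken over unit vectors, vanishes; the passage from a dense set of vectors to unit vectors is routine normalization. Similarly the condition "$\pi_k$ has no nonzero $H$-invariant vector" is Borel: the orthogonal projection onto the space of $H$-invariant vectors is the projection onto $\bigcap_{n: g_n\in H}\ker(\pi_k(g_n)-I)$, and $H\mapsto \|P_H v_j^{(k)}\|$ is a decreasing limit (over finite subsets of $\{n: g_n\in H\}$) of Borel functions, hence Borel. Assembling, the set of $H$ for which \emph{some} $\pi_k$ witnesses failure of relative property (T) is a countable union over $k$ of Borel sets, hence Borel, and its complement — which by the cited characterization is exactly $\mathcal S_{r(T)}(G)$ — is Borel as well.

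The main obstacle I anticipate is justifying that a \emph{countable} list of representations suffices to detect relative property (T): one must argue that if $(G,H)$ fails relative property (T) then the failure is already visible in one of the $\pi_k$. The cleanest route is to work on the affine side — relative property (T) for $(G,H)$ is equivalent to a uniform statement about continuous affine isometric $G$-actions having $H$-almost-fixed points implying $H$-fixed points — and to note that by separability of $G$ (second countable) every continuous isometric $G$-action on a Hilbert space is, after restricting to the closed $G$-invariant affine subspace generated by a countable orbit, built on a separable Hilbert space, and that the relevant cocycle data lie in a separable space on which one can run a countable exhaustion; alternatively, one cites that relative property (T) is characterized by a spectral-gap condition that is stable under weak containment, so a single representation weakly containing a countable Fell-dense family does the job. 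Once that reduction is in hand, everything else is the routine Chabauty-measurability of "$g_n\in H$" combined with stability of Borel functions under countable sup, inf, and limits.
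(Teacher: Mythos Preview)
Your argument has a genuine gap at the step where you claim
\[
\sup_{h\in H}\|\pi(h)v-v\| \;=\; \sup_{n:\, g_n\in H}\|\pi(g_n)v-v\|.
\]
This identity requires that $D\cap H$ be dense in $H$, but a fixed countable dense subset $D\subset G$ need not meet a closed subgroup $H$ in anything larger than $\{e\}$ (think of $G=\R$, $D=\Q$, $H=\sqrt{2}\,\Z$). Consequently your function $F_k(H)$ does not compute the quantity you want, and the whole Borel bookkeeping built on the indicators $\mathbf 1[g_n\in H]$ collapses. The fix would be to replace the fixed dense set by a family of Borel \emph{selector} functions $d_i:\SG\to G$ with $\{d_i(H)\}_i$ dense in $H$ for every $H$ (Kuratowski--Ryll-Nardzewski, as in the appendix of the paper), and then run your measurability argument with $H\mapsto d_i(H)$ in place of the static $g_n$.

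The paper avoids this issue entirely by working on the positive-definite-function side. Using the quantitative characterization of relative property~(T) from \cite{MR2154620}, it observes that for a fixed function of positive type $\psi$ the set $\{H\in\SG:\ \inf_{g\in H}\Re\psi(g)\ge 1-\alpha\}$ is \emph{closed} in the Chabauty topology --- this only needs continuity of $\psi$ and the definition of Chabauty convergence, no selectors. Intersecting such closed sets over all $\psi$ with a given $(Q,\varepsilon)$-almost-invariance and then taking a countable union over pairs $(Q,\varepsilon)$ (via $\sigma$-compactness) exhibits $\mathcal S_{r(T)}(G)$ as an $F_\sigma$ set. Besides sidestepping your density problem, this approach also makes the ``countable family of representations'' reduction unnecessary: the uncountable intersection over $\psi$ is harmless because it is an intersection of closed sets.
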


\begin{proof} We use this quantitative characterization of relative property (T) from \cite{MR2154620}. The pair $(G,H)$ has relative property (T) if and only if for every $\delta>0$ there is pair $(Q,\varepsilon)$ (consisting of a compact subset and a positive number) such that for any unitary representation of $G$ with $(Q,\varepsilon)$-invariant unit vector $v$, there is a $H$-fixed invariant vector at distance less than $\delta$ from $v$.

For a pair $(Q,\varepsilon)$ define $\Psi{(Q,\varepsilon)}$ to be the set of functions of positive type $\psi$ (see \cite[Definition C.4.1]{MR2415834}) with $\psi(e)=1$ and $\inf_{g\in Q} \Re(\psi(g))\geq 1-\varepsilon/2$. For such a function the GNS construction yields a unitary representation with a $(Q,\varepsilon)$-invariant unit vector $v$ such that $\psi(g)=\langle gv,v\rangle$. Recall that a unitary representation of  a group  $H$ with a $\left(H,\sqrt{2}\right)$-invariant unit vector has a non-zero invariant vector \cite[Proposition 1.1.5]{MR2415834}. Conversely if $v$ is $(Q,\varepsilon)$-invariant vector in some unitary representation of $G$, then the function $\psi$ defined by $\psi(g)=\langle gv,v\rangle$ belongs to $\Psi(Q,\varepsilon)$. Now choose  $\alpha\in\left(0,1/\sqrt{2}\right)$. With the characterization and the reminder we have

\[\mathcal{S}_{r(T)}(G)=\bigcup_{(Q,\varepsilon)}\bigcap_{\psi\in\Psi{(Q,\varepsilon)}}\left\{H\in\SG;\ \inf_{g\in H}\Re(\psi(g))\geq1-\alpha\right\}.\]
To conclude, it suffices to observe that $\left\{H\in\SG;\ \inf_{g\in H}\Re(\psi(g))\geq1-\alpha\right\}$ is a closed subset and that the union can be replaced by a countable one thanks to $\sigma$-compactness.
\end{proof}

We start the proof of Theorem \ref{(T)} by dealing with finite-dimensional representations.
A topological group is said to have \emph{property (FE)} if any continuous isometric action on a Euclidean space has a fixed point. A subgroup $H<G$ has \emph{relative property (FE)} if for every continuous isometric action of $G$ on a Euclidean subspace, $H$ fixes a point. We mimic Definition \ref{rt} to define IRSs with relative property (FE).

\begin{proposition}\label{FEAR}If $G$ has a spanning IRS with relative property (FE) then $G$ has property (FE). \end{proposition}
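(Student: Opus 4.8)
The plan is to adapt the argument of Theorem \ref{radrm}, replacing the general convex compact $G$-space by a (finite-dimensional) Euclidean space on which $G$ acts by isometries, and to replace the barycenter-of-convex-sets machinery by the ordinary barycenter of a measure on a Euclidean space (or, more efficiently, by passing to fixed-point \emph{subspaces}). Concretely, suppose $G$ has a spanning IRS $\mu$ with relative property (FE), and let $G \action V$ be a continuous isometric action on a Euclidean space $V$; we must produce a global fixed point. Since the action is affine isometric, write it as $g \cdot v = \pi(g)v + b(g)$ for a (finite-dimensional) orthogonal representation $\pi$ and a cocycle $b$. For each $H \in \RASG$ with relative property (FE) — hence in particular for $\mu$-a.e.\ $H$ — the set $\Fix(H) \subseteq V$ of $H$-fixed points is a non-empty affine subspace.

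The key step is then exactly as in Lemma \ref{borelness}: the map $H \mapsto \Fix(H)$ from the relevant Borel subset of $\SG$ to the space of (affine subspaces, hence) closed convex subsets of a large ball of $V$ is Borel and $G$-equivariant, by the same upper-semicontinuity argument (if $H_k \to H$ and $c_k \in \Fix(H_k)$ accumulate at $c$, then continuity of the action forces $c \in \Fix(H)$). Pushing $\mu$ forward gives a $G$-invariant Borel probability measure $\nu$ on the space of convex compact subsets of a ball in $V$. I would then either invoke the finite-dimensional analogue of Lemma \ref{prob(conv(C))2} — after restricting to a minimal $G$-invariant closed convex subset, which exists by compactness of bounded closed convex sets in $V$ — or, more cleanly, argue directly: take the barycenter map $H \mapsto c(H) \in \Fix(H)$ obtained by sending each $H$ to the (unique) point of $\Fix(H)$ of minimal norm, which is Borel and $G$-equivariant, and let $v_0 = \int c(H)\, d\mu(H) \in V$ be its barycenter. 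By $G$-equivariance and affineness of the $G$-action, $v_0$ is a $G$-fixed point; alternatively, one shows the set of $H$ fixing $v_0$ has full $\mu$-measure and then, since $\mu$ is spanning, Lemma \ref{positiv} (or the spanning property as recalled after Definition of spanning IRS) forces $G$ itself to fix $v_0$.

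The main obstacle, and the reason some care is needed, is passing from "$\mu$-a.e.\ $H$ fixes a point of $V$" to "$G$ fixes a point of $V$": a priori different $H$'s fix different affine subspaces, and one must exhibit a single point fixed by enough subgroups. This is precisely what the equivariant Borel selection $H \mapsto \Fix(H)$ together with the barycenter/minimality argument of Section \ref{cone} is designed to handle — the value $v_0$ (respectively the minimal invariant convex subset reduced to a point) is canonical, hence $G$-invariant, and its stabilizer contains $\mu$-a.e.\ $H$, so the spanning hypothesis closes the argument. A minor additional point is that one should reduce to a $G$-bounded setting (e.g.\ restrict attention to an orbit-closed bounded convex region, or use that any finite-energy cocycle situation can be treated on a closed ball), so that the space of convex compact subsets is itself compact metrizable and Lemma \ref{compactness} applies verbatim.
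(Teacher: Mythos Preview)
Your proposal has a genuine gap, and in fact both suggested routes fail at the same place.

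For the ``clean'' route via $c(H) =$ the point of $\Fix(H)$ of minimal norm: this selection is \emph{not} $G$-equivariant. The $G$-action on $V$ is affine, $g\cdot v = \pi(g)v + b(g)$, so while $\Fix(gHg^{-1}) = g\cdot \Fix(H)$ holds, the point of minimal norm in $g\cdot \Fix(H)$ is the point of that set closest to $0$, not $g$ applied to the point of $\Fix(H)$ closest to $0$ (unless $b(g)=0$). Any origin-dependent selection has this defect, and there is no $G$-fixed origin available --- that is exactly what you are trying to produce. Even setting equivariance aside, there is no reason for $H\mapsto \|c(H)\|$ to be $\mu$-integrable, so the barycenter $\int c(H)\,d\mu(H)$ need not exist.

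For the other route, your ``minor additional point'' --- reducing to a bounded $G$-invariant convex region so that Lemma~\ref{compactness} and Lemma~\ref{prob(conv(C))2} apply --- is not minor; it is the whole difficulty, and the reduction is circular. Any non-empty bounded $G$-invariant closed convex subset of a Euclidean space has a $G$-fixed circumcenter, so its existence is equivalent to what you want to prove. The sets $\Fix(H)$ are affine subspaces, typically unbounded, and there is no a priori ball containing them (or a selection from them).

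The paper's argument is therefore genuinely different from that of Theorem~\ref{radrm}. It works on the space $\CR(E)$ of \emph{all} closed convex subsets of $E$ with the Wijsman topology, pushes $\mu$ forward via $H\mapsto \Fix(H)$, and proves a rigidity statement tailored to this non-compact setting (Lemma~\ref{prob(conv(E))2}): assuming the linear part is irreducible and there is no global fixed point, one studies the convex function $f(x)=\int \bigl(d(x,C)-d(x_0,C)\bigr)\,d\nu(C)$ and uses CAT(0)-type arguments (if $f$ has no minimum, $G$ fixes a point at infinity, contradicting irreducibility; the minimum set, if unbounded and with no fixed point at infinity, is affine and hence all of $E$) to force $f$ constant and then $\nu=\delta_E$. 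The general case is reduced to the irreducible one by decomposing the orthogonal part into irreducibles. The spanning hypothesis enters only at the very end, exactly as you anticipated.
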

The proof of this proposition relies on similar methods as the proof of the main theorem in \cite{DGLL}. We prove Proposition \ref{FEAR} without emphasizing questions about measurability of the constructions. The interested reader may have a look at  \cite{DGLL} for those questions.

Proposition \ref{FEAR} will follow  from the following lemma. If $E$ is a Euclidean space, we endow the space of all closed convex subspaces $\CR(E)$ with the coarsest topology such that $x\mapsto d(x,C)$ is a continuous function on $\CR(E)$ for every $x\in E$. This is the so-called Wijsman topology \cite{MR1269778}.

\begin{lemma}\label{prob(conv(E))2}
Let $E$ be a finite dimensional Euclidean space. Assume that $G$ acts by isometries on $E$, without fixed points, and irreducible linear part. The only $G$-invariant Borel probability measure on $\CR(E)$ is $\delta_E$.
 \end{lemma}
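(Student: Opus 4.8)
The plan is to mimic the argument of Lemma \ref{prob(conv(C))2}, taking barycenters, but the new feature is that $\CR(E)$ contains unbounded convex sets, so one must first control which convex subsets can possibly appear with positive measure. Let $\nu$ be a $G$-invariant Borel probability measure on $\CR(E)$. The first step is to show that $\nu$-almost every $C \in \CR(E)$ is either all of $E$ or is bounded; more precisely, I would look at the \emph{direction space} (recession cone) $\mathrm{rec}(C) = \{v \in E : C + v \subseteq C\}$, a closed convex cone, and its linear span $V(C)$, a linear subspace of $E$. The assignment $C \mapsto V(C)$ is $G$-equivariant (for the linear part of the action) and measurable, so the pushforward of $\nu$ is a $G$-invariant probability measure on the finite set of linear subspaces of $E$ that occur; since the linear part is irreducible, by averaging one sees that either $V(C) = 0$ a.s. (so $C$ is bounded a.s., i.e.\ compact) or $V(C) = E$ a.s. (so $C = E$ a.s., since a closed convex set whose recession cone spans $E$ and contains a full linear subspace $E$ is $E$ itself — one should check that the cone is in fact all of $E$, using invariance of the cone-valued map under the irreducible $G$-action again, which forces the cone to be a $G$-invariant convex cone spanning $E$, hence $E$). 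In the second case we are done, so it remains to rule out the first case.

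So assume $C$ is bounded ($=$ compact) for $\nu$-a.e.\ $C$. Then one would like to take the barycenter $C_0 = \int C \, \dd\nu(C)$ in the cone $\CR(E)$ of compact convex sets, exactly as in Section \ref{cone} applied to the Banach space $E^* = E$ (finite dimensional, hence trivially separable). But the barycenter is only well-defined if the integrand stays in a compact (equivalently bounded) part of the cone — one needs $\int \|C\| \, \dd\nu(C) < \infty$, where $\|C\| = \sup_{x \in C}\|x\|$. Here I would use invariance: the function $C \mapsto \|C\|$ on $\CR(E)$ is $G$-invariant (the $G$-action on $E$ is by isometries only on the translation-free part — wait, the action is affine isometric, so $\|gC\|$ need not equal $\|C\|$). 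This is the key subtlety. To handle it, I would instead work with the \emph{circumcenter map}: a bounded closed convex set $C$ in a Euclidean space has a unique circumcenter $c(C)$ and circumradius $r(C)$, and $c(gC) = g\cdot c(C)$, $r(gC) = r(C)$ for affine isometries $g$. So $r$ is a $G$-invariant bounded measurable function; let $R_0$ be its $\nu$-essential supremum and restrict attention to the ($G$-invariant, full-measure if we first condition) set where $r(C)$ is close to $R_0$; actually since $r$ is $G$-invariant and, I claim, $\nu$-essentially constant — because $G$ has no fixed point on $E$, hence acts without bounded orbits on ... hmm, this needs care — the cleanest route is: $C \mapsto r(C)$ pushes $\nu$ to an invariant measure on $[0,\infty)$, which is automatically invariant (no group action downstairs), so this gives nothing directly; instead use that on the set $\{r(C) \le R\}$ the circumcenters $c(C)$ form a $G$-equivariant map to $E$, push forward to get a $G$-invariant probability measure on $E$ supported on a bounded set, take \emph{its} barycenter to get a fixed point of $G$ in $E$, contradicting the hypothesis — unless that set has measure zero for every finite $R$, i.e.\ $r(C) = \infty$ a.s., i.e.\ $C$ is unbounded a.s., contradicting our Case assumption. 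Hence Case one is impossible and $C = E$ a.s.

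Let me restate the logic cleanly, since the above is the heart of it. Define $A_R = \{C \in \CR(E) : C \text{ bounded}, r(C) \le R\}$, a $G$-invariant Borel set. If $\nu(A_R) > 0$ for some $R < \infty$, then conditioning $\nu$ on $A_R$ and pushing forward along the equivariant circumcenter map $c \colon A_R \to E$ yields a $G$-invariant Borel probability measure on the closed ball of radius depending on $R$; its barycenter is a $G$-fixed point of the affine action on $E$, contradicting the hypothesis. Therefore $\nu(A_R) = 0$ for all $R$, so $\nu$-a.e.\ $C$ is unbounded, i.e.\ $\mathrm{rec}(C) \ne 0$. Then $C \mapsto \overline{\mathrm{span}}\,\mathrm{rec}(C)$ is an equivariant map (for the linear part) from a full-measure set into the Grassmannian of nonzero subspaces; pushing forward and averaging over $G$, irreducibility of the linear part forces this subspace to be $E$ for $\nu$-a.e.\ $C$ — more carefully, $C \mapsto \mathrm{rec}(C)$ is equivariant into the space of closed convex cones, the "expectation" of these cones (intersection over the orbit closure, or barycenter) is a $G$-invariant closed convex cone which, having nonempty interior after averaging directions and being invariant under the irreducible linear action, must be all of $E$; then $\mathrm{rec}(C) = E$ a.s., and a closed convex set with recession cone $E$ is $E$. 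Thus $\nu = \delta_E$.

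The main obstacle I anticipate is exactly the passage in the middle paragraph: the affine (not linear) nature of the action means the naive "barycenter in the cone $\CR(E)$" of Section \ref{cone} does not directly apply to unbounded sets, and one must extract a genuine fixed point in $E$ from boundedness. The circumcenter trick is what makes this clean, but one should double-check that the circumcenter is genuinely equivariant under affine isometries (it is, by uniqueness) and measurable as a function of $C$ in the Wijsman topology (it is, since $r(C) = \inf_{x} \sup_{y \in C} \|x - y\|$ and the inner sup is Wijsman-continuous, or one can note $c(C)$ and $r(C)$ are continuous functions of $C$ on bounded pieces). The remaining steps — the recession-cone / irreducibility argument to conclude $C = E$ once $C$ is a.s.\ unbounded — are routine given that a $G$-invariant convex cone in an irreducible representation spanning a nonzero subspace must be everything.
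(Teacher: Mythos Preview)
Your approach is genuinely different from the paper's, and it has a real gap in the unbounded case.

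First, the paper's argument. It avoids any case split by considering the convex function
\[
f(x)=\int_{\CR(E)}\bigl(d(x,C)-d(x_0,C)\bigr)\,\dd\nu(C),
\]
which is well-defined (the integrand is bounded by $d(x,x_0)$) and satisfies the cocycle relation $f(gx)=f(x)+f(gx_0)$. One then invokes the cited CAT(0)-type lemmas: if $f$ has no minimum, $G$ fixes a point at infinity, contradicting irreducibility; otherwise the minimal set $M$ is $G$-invariant and convex, and (using again that $G$ fixes no point at infinity, plus irreducibility) one gets $M=E$. Thus $f$ is constant; since it is an integral of convex functions $x\mapsto d(x,C)-d(x_0,C)$, each of these must be affine almost surely, which forces $C=E$ a.s.

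Now your proposal. The bounded half is essentially salvageable, though your stated reason is wrong: the circumcenters of sets in $A_R$ are \emph{not} confined to any ball---the circumradius bounds the size of $C$, not the location of $c(C)$---so ``take its barycenter'' is not justified. What is true is that any $G$-invariant Borel probability on $E$ forces bounded $G$-orbits (choose $R$ with $\mu(B(x_0,R))>1/2$; then $B(x_0,R)\cap B(gx_0,R)\neq\emptyset$ for every $g$), hence a fixed point by the circumcenter of an orbit. So that half can be repaired.

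The serious problem is the unbounded half. Your key claim, that irreducibility of the linear part forces $V(C)=\mathrm{span}\,\mathrm{rec}(C)$ to equal $E$ almost surely once it is nonzero, is false: irreducibility forbids a single invariant proper subspace, not an invariant \emph{probability measure} on the Grassmannian. For instance, with linear part $\SO(2)$ on $\R^2$ the rotation-invariant measure on lines is a perfectly good invariant measure supported on proper subspaces. The averaging trick (integrate the orthogonal projections onto $V(C)$) yields a $G$-commuting self-adjoint operator, hence a scalar by Schur, but a scalar strictly between $0$ and $1$ is compatible with $V(C)$ being a.s.\ a proper subspace. Even granting $V(C)=E$ a.s., the further step $\mathrm{rec}(C)=E$ a.s.\ is not established: a closed convex cone spanning $E$ can be a half-space, and your ``expectation of cones'' is too vague---the barycenter of the sets $\mathrm{rec}(C)\cap E^*_1$ is some $G$-invariant convex body containing $0$, but nothing forces it to be all of $E^*_1$, and the extreme-point argument of Lemma~\ref{compactness} only lets you deduce $\mathrm{rec}(C)\cap E^*_1=E^*_1$ a.s.\ once you already know the barycenter equals $E^*_1$.

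So the recession-cone route, at least as sketched, does not close. The paper's displacement-function method sidesteps all of this by never separating bounded from unbounded and by exploiting the convexity of $x\mapsto d(x,C)$ directly.
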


\begin{proof}
Let $\nu\in\Prob(\CR(E))^G$.  Fix $x_0\in E $ and look at the function $f:E\to\R$ defined by

\[f(x)=\int_{\CR(E)}d(x,C)-d(x_0,C))\dd\nu(C).\]

This continuous function satisfies the cocycle relation $f(gx)=f(x)+f(gx_0)$.
Either $f$ achieves a minimum or not.
By \cite[Lemma 2.4]{MR1645958}, if $f$ has no minimum then  $G$ fixes a point at infinity of $E$. In that case the linear part of $G$ stabilizes the direction of this point. As we assumed the representation to be irreducible, this cannot be the case.

Thus $f$ has a minimal set $M$, which is a $G$-invariant and convex. If this convex subset is bounded, then it has a $G$-invariant circumcenter, contradicting the assumption. If it is unbounded, then by \cite[Lemma 1.7]{MR1645958} and the fact that $G$ does not stabilize a point at infinity, we conclude that $M$ is an affine subspace of $E$. By the irreducibility assumption, it follows that $M=E$, hence that $f$ is constant.

 In particular, one has that $x\mapsto d(x,C)$ is affine for almost all $C\in\CR(E)$. Since the distance to a strict convex subspace is not affine (it is non constant and does not take negative values), one has $C=E$ almost surely, hence $\nu=\delta_C$.
\end{proof}

\begin{proof}[Proof of Proposition \ref{FEAR}] Let $E$ be a Euclidean space with an action of $G$. Up to consider a minimal invariant affine subspace we assume there is no invariant affine subspace of $E$. Let $\mu\in\IRS$ with relative property (FE).

First we consider the case when the linear part of the action of $E$ is irreducible. In that case, by assumption, almost every $H\in\SG$ fixes an affine subspace of $E$. Pushing forward $\mu$ by the map $H\mapsto \Fix(H)$, we get a $G$-invariant measure on $\CR(E)$. By Lemma \ref{prob(conv(E))2}, this measure is $\delta_E$, meaning that almost every $H\in \SG$ fixes $E$ pointwise. Since the IRS is spanning, this means that $G$ fixes $E$ pointwise.

In the general case, any orthogonal representation can be written as the orthogonal sum of irreducible representations $(\pi_1,E_1),\dots,(\pi_n,E_n)$. If $b$ is the cocycle associated to the action of $G$ on $E$ and $P_i:E\to E_i$ is the projection then $P_i\circ b$ is a cocycle for the representation $\pi_i$ on $E_i$. Hence by the previous case, this associated affine action on $E_i$ fixes a point for every $i$. This implies that $G$ fixes a point in $E$.
 \end{proof}

In the  proof of Theorem \ref{(T)}, we will use the notion of weakly mixing unitary representations. Recall that these are unitary representations without finite dimensional subrepresentation. Moreover the tensor product of two weakly mixing representations is still weakly mixing.

\begin{lemma}\label{weak-mixing} Let $(\pi,\mathcal{H})$ be a separable weakly mixing unitary representation of $G$. 
Then the only $G$-invariant measure on $\mathcal{H}$ is $\delta_{\{0\}}$.
\end{lemma}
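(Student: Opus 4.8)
The plan is to reduce to the setting of Lemma \ref{prob(conv(C))2} by realizing the unitary representation as a convex compact $G$-space and ruling out proper invariant closed convex subsets. First I would observe that it suffices to treat the case where the $G$-invariant measure $\nu$ on $\mathcal{H}$ is supported on a bounded set: indeed, the function $v\mapsto \|v\|$ is $G$-invariant (the representation is isometric), hence $\nu$-almost everywhere constant; write this constant value as $R$. If $R=0$ we are done, so suppose $R>0$ and rescale so that $\nu$ is supported on the sphere of radius $R$, in particular on a weak*-compact (i.e. weakly compact, as $\mathcal{H}$ is a Hilbert space) set.

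The key step is to use the weak mixing hypothesis to show that $\nu$ must actually be concentrated at a single point, and that this point must be $0$, contradicting $R>0$. Here I would take the barycenter $v_0=\int v\,\dd\nu(v)$, which exists by weak compactness of the closed convex hull of $\mathrm{Supp}(\nu)$; by $G$-invariance of $\nu$ and linearity of $\pi(g)$, the vector $v_0$ is $G$-fixed. But a weakly mixing representation has no nonzero finite-dimensional subrepresentation, in particular no nonzero invariant vector, so $v_0=0$. Now I would compute $\int \|v\|^2\,\dd\nu(v)$ in two ways, or rather exploit the tensor trick: consider the representation $\pi\otimes\bar\pi$ on $\mathcal{H}\otimes\bar{\mathcal{H}}$ (or $\mathcal{H}\widehat\otimes\mathcal{H}$, the Hilbert–Schmidt operators), which is again weakly mixing, and push $\nu$ forward under $v\mapsto v\otimes \bar v$. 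The barycenter of this pushforward is a $G$-invariant vector in $\mathcal{H}\otimes\bar{\mathcal{H}}$, hence $0$ by weak mixing again; but pairing this barycenter against a fixed vector of the form $w\otimes\bar w$ gives $\int |\langle v,w\rangle|^2\,\dd\nu(v) = 0$ for all $w$, forcing $v=0$ $\nu$-almost surely, i.e. $\nu=\delta_{\{0\}}$.

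Alternatively, and perhaps more in the spirit of Lemma \ref{prob(conv(C))2}: realize the closed ball of radius $R$ (or the weak*-closed convex hull of $\mathrm{Supp}(\nu)$) as a convex compact $G$-space $C$ inside the unit ball of $\mathcal{H}$ rescaled, pass by Zorn's lemma to a minimal $G$-invariant closed convex subspace $C'\subseteq C$ meeting $\mathrm{Supp}(\nu)$ appropriately, and apply the fixed-point/minimality machinery; but the barycenter of a $G$-invariant measure on a convex compact $G$-space is itself a $G$-fixed point, and weak mixing kills $G$-fixed points unless they are $0$. I expect the main obstacle to be the bookkeeping around \emph{separability and weak compactness}: one must be careful that the relevant subsets of $\mathcal{H}$ are genuinely weakly compact and metrizable so that barycenters exist and the measure-theoretic arguments (almost-sure constancy of $\|\cdot\|$, the pushforward under $v\mapsto v\otimes\bar v$ being well-defined and its barycenter computable by duality) go through cleanly. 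The tensor-product step is the conceptual heart — it is exactly the standard argument that a weakly mixing representation has no almost-invariant \emph{probability measures}, not merely no almost-invariant vectors — and once that is set up the conclusion $\nu=\delta_{\{0\}}$ is immediate.
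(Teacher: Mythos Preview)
Your tensor-product step --- push $\nu$ forward under $v\mapsto v\otimes\bar v$, take the barycenter in $\mathcal{H}\otimes\overline{\mathcal{H}}$, use weak mixing of $\pi\otimes\bar\pi$ to force it to vanish, then pair against $w\otimes\bar w$ to get $\int|\langle v,w\rangle|^2\,\dd\nu(v)=0$ --- is exactly the paper's argument, and the conclusion via separability is the same.

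There is, however, a genuine gap in your reduction step. From ``$v\mapsto\|v\|$ is $G$-invariant'' you \emph{cannot} conclude that $\|v\|$ is $\nu$-almost everywhere constant: that inference requires $\nu$ to be ergodic, which is not assumed. Without that reduction your barycenter $\int v\otimes\bar v\,\dd\nu(v)$ need not exist, since $\nu$ may have infinite second moment. You could repair this by ergodic decomposition, or by disintegrating $\nu$ over the level sets of $\|\cdot\|$ (each fibre measure is then $G$-invariant and supported on a sphere). The paper sidesteps the whole issue more cleanly: it normalizes, setting $\Phi(x)=x\otimes\bar x/\|x\|^2$ for $x\neq 0$ and $\Phi(0)=0$, so that $\Phi$ lands in the unit ball of $\mathcal{H}\otimes\overline{\mathcal{H}}$ and the barycenter $\int\Phi\,\dd\nu$ exists with no moment hypothesis at all. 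After that single adjustment the proof is identical to yours.
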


\begin{proof}
Let $\nu$ be such a measure and $(\overline{\pi},\overline{\mathcal{H}})$ the conjugate representation \cite[Definition A.1.10]{MR2415834}. Consider the map $\Phi:\mathcal H\to \mathcal{H}\otimes\overline{\mathcal{H}}$ defined by $\Phi(x)=x\otimes x/\|x\|^2$ for $x\neq0$ and $\Phi(0)=0$.  Now, $\int_{\mathcal H}\Phi(x)\dd\mu(x)$ is a $G$-invariant vector of $\mathcal{H}\otimes\overline{\mathcal{H}}$ which has to be 0 because of weak-mixing. In particular, for any $v\in\mathcal{H}$ and almost all $x\in \mathcal H$, we have $|\langle v,x\rangle|^2=\langle v\otimes v,x\otimes x\rangle=0$. Since $\mathcal{H}$ is separable, $x=0$ for almost all $x\in \mathcal H$, meaning that $\nu=\delta_{\{ 0\}}$

\end{proof}

\begin{proof}[Proof of Theorem \ref{(T)}]We consider a spanning IRS $\mu$ with relative property (T). Let $\pi$ be an irreducible unitary representation of $G$ with underlying Hilbert space $\mathcal{H}$ (which we may assume separable since $G$ is second countable), $b\in Z^1(G,\pi)$ and consider the associated affine action $\alpha$ given by $\alpha(g)v=\pi(g)v+b(g)$ for any $v\in\mathcal{H}$. The case where $\mathcal{H}$ has finite dimension is treated in Proposition \ref{FEAR}. From now on, we assume $\mathcal{H}$ has infinite dimension, thus weakly mixing.

For any $H\in\mathcal{S}_{r(T)}(G)$, $\Fix(H)$ is a non-empty affine subspace of $\mathcal{H}$. For $H_1,\dots,H_n\in\mathcal{S}_{r(T)}(G)$ and $\varepsilon>0$, we define the set of $(\cup_iH_i,\varepsilon)$-fixed points
\[F(H_1,\dots,H_n,\varepsilon)=\{v\in\mathcal{H};\ \forall h\in H_1\cup\dots\cup H_N,\ ||\alpha(h)v-v||<\varepsilon\}.\]
We claim there is $A_n\subseteq\SG^n$, with $\mu^n(A_n)=1$ such that   for all $\varepsilon>0$ and $(H_1,\dots,H_n)\in A_n$, $F(H_1,\dots,H_n,\varepsilon)\neq \{0\}$. We prove it by induction. The case $n=1$ follows from $\mu(\mathcal{S}_{r(T)}(G))=1$. Assume this is true for $n-1$, let $v(H_1,\dots,H_{n-1};H_n)$ be the vector of minimal norm in the closed convex set $ \overline{F(H_1,\dots,H_{n-1},\varepsilon/3)-\Fix(H_n)}$. The map $(H_1,\dots,H_n)\mapsto v(H_1,\dots,H_{n-1};H_n)$ satisfies the assumption of Lemma \ref{weak-mixing} and thus is 0. In particular, there is some $v$ which is at distance less than $\varepsilon/3$ from both $F(H_1,\dots,H_{n-1},\varepsilon/3)$ and $\Fix(H_n)$ and thus $v\in F(H_1,\dots,H_n,\varepsilon)$.

Let $F=\{g_1,\dots,g_n\}\subseteq G$ be a finite subset, thanks to Lemma \ref{positiv} one can find $(h_i^j)$ such that $g_i=h_i^1\cdots h_i^{k(i)}$ and $\mu\left(S_{h_i^j}\right)>0$.

Let $N=\sum_{i=1}^nk(i)$. The product set $P=\prod_{i=1}^n (S_{h_i^1}\times\cdots\times S_{h_i^{k(i)}})\subseteq \SG^N$ has also positive measure and thus intersects $A_{N}$. In particular, one can find $\left(H_1^1,\dots,H_n^{k(n)}\right)$ in $P$ such that $F(H_1^1,\dots,H_n^{k(n)},\varepsilon/N)\neq\emptyset$. Now, observe that $g_i\in H_i^1\cdots H_i^{k(i)}$ and thus taking  $v\in F(H_1^{1},\dots,H^{{k(n)}}_n,\varepsilon/N)$, we get a $(F,\varepsilon)$-invariant vector. Thus $\overline{H^1}(G,\pi)=0$  and Shalom's theorem concludes the proof.
\end{proof}

\begin{remark}In case one considers non spanning IRSs, our methods lead to the following statement for  $G$ countable: The normal closure of an IRS with relative property (T) has relative property $\barFH$. We refer to \cite{MR2892915} for the definition of property $\barFH$ and its relative version. We emphasize that even if Shalom's theorem shows that property (T) and property $\barFH$ coincide for compactly generated groups, the relative versions do \emph{not} coincide.
\end{remark}

The following proposition is an adaptation of the classical result of compact generation for locally compact groups with property (T) (see for example \cite[Theorem 1.3.1]{MR2415834}).

\begin{proposition}\label{compactgeneration}If $G$ has a spanning ergodic IRS with relative property (T) then $G$ is compactly generated as a normal subgroup.
\end{proposition}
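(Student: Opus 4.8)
The plan is to adapt the classical proof that a locally compact group with property (T) is compactly generated, replacing the full group by its normal closure and using the spanning ergodic IRS to produce a normal exhaustion along which relative property (T) is visible. First I would fix a countable increasing family of compactly generated open subgroups $U_1\subseteq U_2\subseteq\cdots$ whose union is $G$ (which exists since $G$ is locally compact second countable), and let $N_k$ be the normal closure of $U_k$ in $G$, so that $N_1\subseteq N_2\subseteq\cdots$ are open normal subgroups with $\bigcup_k N_k=G$. Each $N_k$ is compactly generated as a normal subgroup of $G$. Since $\mu$ is spanning, $G$ is the normal closure of $\mu$, i.e.\ the smallest closed subgroup $N$ with $\mu(\mathcal{S}(N))=1$; in particular no $N_k$ (if proper) can have $\mu(\mathcal{S}(N_k))=1$. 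The goal is to show that in fact $N_k=G$ for some $k$, which gives compact generation as a normal subgroup.

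The key idea is that the sets $\mathcal{S}(N_k)$ are open in $\SG$ — because $N_k$ is open, a closed subgroup sufficiently Chabauty-close to a subgroup of $N_k$ is still contained in $N_k$ — and they increase to all of $\SG$ (every compactly generated, hence $\sigma$-compact, closed subgroup of $G$ meets only finitely many cosets outside some $N_k$; more simply, any $H\in\SG$ is contained in some $N_k$ since $H=\overline{\bigcup H\cap U_k}$ lies in $\bigcup N_k=G$ and the $N_k$ are open, so an increasing open cover argument applies to the compact $\SG$... but $\SG$ compact does not immediately finish since the cover need not be by a single member). So instead I would argue measure-theoretically: $\mu(\mathcal{S}(N_k))\nearrow\mu(\mathcal{S}(G))=1$, hence there is $k_0$ with $\mu(\mathcal{S}(N_{k_0}))>0$. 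Now $\mathcal{S}(N_{k_0})$ is a $G$-invariant Borel set (each $N_{k_0}$ is normal), and by ergodicity of $\mu$ it has measure $1$. By Lemma \ref{minsub} (minimality of the normal closure $N=G$ of $\mu$) this forces $N_{k_0}=G$. Therefore $G=N_{k_0}$ is the normal closure of the compactly generated open subgroup $U_{k_0}$, i.e.\ $G$ is compactly generated as a normal subgroup.

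Let me reexamine where relative property (T) actually enters, since the sketch above did not use it: it is needed to guarantee that $\mathcal{S}(N_k)$ eventually has positive measure, because a priori the $N_k$ increase to $G$ but $\mu$ might not be concentrated on any single $N_k$ — indeed $\mu(\mathcal{S}(N_k))\to 1$ is automatic, so positive measure for large $k$ is automatic too, and ergodicity then upgrades it. So in fact this argument as stated uses only that $\mu$ is spanning and ergodic. That cannot be right, since the conclusion surely fails for, say, an ergodic spanning IRS of a non-compactly-generated group. The gap is that $\mathcal{S}(N_k)$ need not be $G$-invariant unless $N_k$ is normal, and $N_k$ as I defined it \emph{is} normal — but then $N_k$ being a proper open normal subgroup with $\mu(\mathcal{S}(N_k))>0$ is not contradictory unless we know $\mu$-a.e.\ $H$ is \emph{not} contained in a proper open normal subgroup, which is exactly spanning. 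So the real content must be: without property (T), one cannot produce the compactly generated $U_k$ in a way compatible with the IRS. The correct role of relative property (T): for each $H\in\mathcal{S}_{r(T)}(G)$, the pair $(G,H)$ has relative property (T), hence (by the classical argument, \cite[Theorem 1.3.1]{MR2415834} relativized) $H$ is contained in a compactly generated open subgroup, and moreover one can take a single compactly generated open subgroup $Q\leq G$ such that $\mu$-a.e.\ $H$ lies in the normal closure of $Q$. Concretely, using Theorem \ref{(T)}'s machinery: relative property (T) of $\mu$ implies the associated affine actions have almost-invariant vectors for $\mu$-a.e.\ finitely many $H_i$, and one extracts from this a Kazhdan-type pair; feeding this into the standard proof that Kazhdan pairs yield compact generation produces a compactly generated open $Q$ with $\mu(\mathcal{S}(\overline{\langle\langle Q\rangle\rangle}))>0$, where $\langle\langle Q\rangle\rangle$ is the normal closure of $Q$.

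The main obstacle, then, is precisely making this last step rigorous: extracting, from relative property (T) of the IRS, a genuine Kazhdan constant for $G$ relative to a \emph{single} compact set, and then running the classical compact-generation argument inside the normal closure. I would proceed as follows. By the quantitative characterization of relative property (T) used in the Borel-ness lemma, for each $H\in\mathcal{S}_{r(T)}(G)$ and each $\delta>0$ there is a pair $(Q_H,\varepsilon_H)$ controlling $H$-almost-fixed vectors. Using the ergodicity of $\mu$ together with the spanning hypothesis (via Lemma \ref{positiv} as in the proof of Theorem \ref{(T)}), one shows that finitely many such $(Q_{H_i},\varepsilon_{H_i})$ already ``span'': their union's normal closure is an open compactly generated normal subgroup $M$ with $\mu(\mathcal{S}(M))>0$, hence $=1$ by ergodicity, hence $=G$ by Lemma \ref{minsub}. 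Assembling: $G=M$ is generated as a normal subgroup by a compact set, which is the assertion. I expect the delicate point to be the uniformity — passing from ``for $\mu$-a.e.\ $H$ there is a controlling pair'' to ``finitely many controlling pairs suffice'' — which should follow by the same inductive almost-invariant-vector construction (Lemma \ref{weak-mixing}) used in the proof of Theorem \ref{(T)}, combined with the observation that $G$, being the normal closure of the $\mu$-essential elements, is exhausted by the normal closures of finite subsets of them.
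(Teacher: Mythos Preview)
Your skeleton --- exhaust $G$ by normal closures $N_k$ of compactly generated open subgroups, use ergodicity to upgrade positive measure to full measure, then use spanning to force $N_{k_0}=G$ --- is exactly the shape of the paper's argument. The genuine gap is the step you call automatic, namely $\mu(\mathcal{S}(N_k))\nearrow 1$. It is not automatic: the $\mathcal{S}(N_k)$ increase, but their union is only $\{H\in\SG : H\subseteq N_k\text{ for some }k\}$, which need not be all of $\SG$ nor of full measure. For a trivial counterexample take $G=\bigoplus_{\N}\Z$ and $\mu=\delta_G$: this is a spanning ergodic IRS (conjugation is trivial, so ergodicity forces a Dirac mass), yet $G\notin\mathcal{S}(N_k)$ for any $k$ and $\mu(\mathcal{S}(N_k))=0$ throughout. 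Your self-diagnosis locates the problem in the $G$-invariance of $\mathcal{S}(N_k)$, but that is fine since $N_k$ is normal by construction; the actual failure is one step earlier.

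This missing step is precisely where relative property~(T) enters, and the paper's argument is short and direct --- no Kazhdan pairs, no uniformity extraction, no appeal to Lemma~\ref{weak-mixing}. One forms the $G$-representation $\mathcal{H}=\bigoplus_{F\in\mathcal{C}}\ell^2(G/F)$ over all open compactly generated $F\leq G$; it almost has invariant vectors (normalized indicators of single cosets). Relative property~(T) for $(G,H)$ then yields a nonzero $H$-invariant $f$ in some summand $\ell^2(G/F)$; its support is a finite $H$-invariant set of cosets, so choosing $gF$ in the support gives $HgF\subseteq h_1gF\cup\cdots\cup h_ngF$, whence $H\leq\langle F^g,h_1,\dots,h_n\rangle=:F'\in\mathcal{C}$ and $H\leq\langle F'\rangle_G$. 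This \emph{is} the ``relativized classical argument'' you cite, but it has to be carried out rather than invoked. Once it is, your remaining steps (countably many $N_k$, ergodicity, spanning) finish the proof exactly as you wrote them; the entire last paragraph of your proposal about extracting a single controlling pair and inducting via Lemma~\ref{weak-mixing} is a detour.
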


\begin{proof}
Let $\mathcal{C}$ be the set of open  compactly generated subgroups of $G$. For $F\in\mathcal{C}$ denote by $\ell^2(G/F)$ the quasi-regular representation of $G$. We consider the Hilbertian sum
\[\mathcal{H}=\bigoplus_{F\in\mathcal{C}}\ell^2(G/F).\]

From construction, the diagonal representation of $G$ on $\mathcal{H}$ almost has  invariant vectors and thus for almost all $H\in\SG$, $H$ has non-trivial invariant vectors. In particular, for such $H$, there is $F\in\mathcal{C}$ such that $H$ has a non-trivial invariant vector $f\in\ell^2(G/F)$. Let $g\in G$ such that $f(gF)\neq0$. There are $h_1,\dots,h_n$ such that $HgF\subseteq h_1gF\cup\dots\cup h_ngF$ that is $H\subseteq h_1F^g\cup\dots\cup h_nF^g$. In particular, $H$ is contained in at most $n$ right classes of the normal closure $\langle F\rangle_G$ of $F$ and thus there is $F'\in\mathcal{C}$ (namely, $F'=\langle F^g,h_1,\dots,h_n\rangle$) such that $H\leq F'$ and thus $H\leq\langle F'\rangle_G$. This last condition is a $G$-invariant closed condition, thanks to ergodicity, there is $F'\in\mathcal{C}$ such that it holds for almost all $H\in\SG$. That is almost surely $H\leq\langle F'\rangle_G$. Finally the spanning property implies that $G=\langle F'\rangle_G$.
\end{proof}

\begin{remark} One may ask if the normal closure of an IRS with relative property (T) is actually compactly generated. Observe that in the proof of Proposition \ref{compactgeneration} we did not use the general assumption that $G$ is second countable. We give a counterexample for a locally compact group which is not second countable.

Consider the lamplighter group over the circle group $G=\Z/2\Z\wr S^1$ with the topology coming from the discrete topology on $\bigoplus_{S^1}\Z/2\Z$ and the usual one on $S^1$. For $s\in S^1$ let $\delta_s$ be the Dirac measure on $\SG$ at the $\Z/2\Z$ copy at coordinate $s$. Let $\mu=\int_{S^1}\delta_s\dd s$. This is an ergodic IRS with normal closure $\bigoplus_{S^1}\Z/2\Z$ which is generated by one element as normal subgroup but is not finitely generated.\end{remark}

\appendix
\setcounter{proposition}{0}
\renewcommand{\thesection}{A}
\section{The set of amenable closed subgroups}\label{appendix}
\begin{center}Phillip Wesolek \end{center}
\medskip
The class of l.c.s.c. groups $G$ such that the set $\mathcal{S}_a(G)\subseteq \mathcal{S}(G)$ is closed in the Chabauty topology is very large by work of Caprace and Monod \cite[Theorem 2]{Caprace:2013kq}. Caprace and Monod raise the following question: \textit{Is $\mathcal{S}_a(G)$ closed in $\mathcal{S}(G)$ for all l.c.s.c. groups $G$?} It is natural to consider a weaker question: \textit{Is $\mathcal{S}_a(G)$ a Borel set in $\mathcal{S}(G)$?} We here answer the latter question in the affirmative.

\begin{theorem}\label{thm:A}
For every l.c.s.c. group $G$, $\mathcal{S}_a(G)$ is a Borel set in $\mathcal{S}(G)$. 
\end{theorem}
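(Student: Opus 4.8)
The goal is to show $\mathcal{S}_a(G)$ is Borel in $\mathcal{S}(G)$ for an arbitrary l.c.s.c. group $G$. The natural first step is to reduce amenability of a closed subgroup $H \leq G$ to a countable amount of data. Since $G$ is l.c.s.c., every closed subgroup $H$ is itself l.c.s.c., hence $\sigma$-compact, and carries a (left) Haar measure; amenability of $H$ is equivalent to the existence of an invariant mean on $L^\infty(H)$, or — more usefully for a descriptive set-theoretic argument — to a Reiter-type or Følner-type condition that can be phrased using countably many test functions. So the first part of the plan is to fix, once and for all, a countable structure: choose a countable dense subgroup or a countable dense subset $\{g_n\}$ of $G$, a proper compatible metric $d$, and a countable family of continuous compactly supported functions on $G$ dense (in the appropriate sense) in $C_c(G)$. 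I expect the proof to exploit that $H \mapsto H$ "varies Borel-measurably" so that quantities like $\int_H \varphi \, d\mu_H$, for $\varphi \in C_c(G)$ and $\mu_H$ a suitably normalized Haar measure on $H$, depend measurably on $H$ in the Chabauty topology — this measurable dependence of Haar measure on the subgroup is the technical backbone and is presumably where the bulk of the work lies.

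The second part is to write amenability as a countable Boolean combination of such measurable conditions. Using a Reiter/Følner characterization: $H$ is amenable iff for every finite $F \subseteq H$ and every $\varepsilon>0$ there is a nonnegative $f \in L^1(H)$ with $\|f\|_1 = 1$ and $\|h \cdot f - f\|_1 < \varepsilon$ for all $h \in F$. One replaces the finite subsets $F \subseteq H$ by finite subsets of the fixed countable dense set $\{g_n\}$ intersected appropriately with $H$ (this is legitimate because the conjugation/translation action is continuous and $H$ is the closure of $H \cap \{g_n\}$ when $\{g_n\}$ is chosen dense in $G$ — one must be a little careful that $H \cap \{g_n\}$ is dense in $H$, which need not hold for a fixed countable set, so instead one works with elements of $G$ that are $\varepsilon$-close to $H$, i.e. with the Chabauty neighborhoods $O^K$). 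One then approximates the almost-invariant function $f$ by functions built from the fixed countable dense family. The upshot should be an expression
\[
\mathcal{S}_a(G) = \bigcap_{k} \bigcup_{m} \bigcap_{j} \{ H : \Theta_{k,m,j}(H) < 1/k \}
\]
where each $\Theta_{k,m,j}$ is a Borel (ideally upper or lower semicontinuous) real-valued function on $\mathcal{S}(G)$, giving a Borel set.

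**The main obstacle.** The hard part is the measurable dependence of the Haar measure $\mu_H$ on $H$, and relatedly, making sense of "$L^1(H)$" uniformly in $H$. Closed subgroups of $G$ vary wildly — they can be discrete, compact, or noncompact with no uniform normalization — so there is no single ambient $L^1$ space. The fix is presumably to work inside $C_c(G)$ or $L^1(G)$-like objects and to realize almost-invariant means via states on $C_b(G)$ or via approximately invariant functions supported near $H$ in $G$ rather than intrinsically on $H$: e.g. phrase amenability of $H$ in terms of the quasi-regular representation $\ell^2(G/H)$ or $L^2(G/H)$ almost having invariant vectors, or in terms of the existence of a $G$-equivariant conditional-expectation-type object, and then discretize. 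One must check that the relevant integrals depend Borel-measurably on $H$; this likely uses a Kuratowski–Ryll-Nardzewski style selection of a measurable field of Haar measures, or the concrete fact that for $\varphi \in C_c(G)$ the map $H \mapsto \int_H \varphi$, with Haar measure normalized by a fixed reference function, is continuous on the locus where that reference integral is bounded away from zero and can be handled by a partition of $\mathcal{S}(G)$ into countably many Borel pieces. Once this measurable-field-of-Haar-measures input is in place, the rest is a (careful but routine) bookkeeping exercise turning the Følner condition into a countable formula, so I would organize the writeup as: (1) fix countable data and construct the measurable field of Haar measures; (2) state the discretized Følner criterion and prove it captures amenability; (3) assemble the Borel formula.
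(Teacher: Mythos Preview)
Your plan is more of a strategy outline than a proof, and it runs directly at the hardest technical point without the reduction that makes the paper's argument work. You correctly identify the main obstacle: for general l.c.s.c.\ groups there is no uniform way to normalize Haar measures $\mu_H$ or to make $H\mapsto \int_H \varphi\,d\mu_H$ Borel, because subgroups can be discrete, connected, compact, or noncompact with no common reference. You gesture at several possible fixes (quasi-regular representations, states on $C_b(G)$, a ``measurable field of Haar measures'') but do not commit to or execute any of them; each of these would require substantial work, and some (e.g.\ weak containment of the trivial representation in $L^2(G/H)$) detect co-amenability rather than amenability of $H$.

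The paper does \emph{not} solve this problem head-on. Instead it first reduces: pass to $G/R_a(G)$ (amenable normal quotient preserves membership in the class), then by structure theory pass to a finite-index subgroup that splits as (connected) $\times$ (totally disconnected). The connected factor is already handled by Caprace--Monod, where $\mathcal{S}_a$ is in fact closed. This leaves only the t.d.l.c.s.c.\ case, and \emph{there} your F{\o}lner-discretization idea works cleanly, because van Dantzig supplies a basis $(V_i)$ of compact open subgroups: every compact open set in any closed $C\leq G$ is a finite disjoint union of cosets of some $C\cap V_i$, so Haar-measure ratios are ratios of integers and are visibly Borel in $C$. The Kuratowski--Ryll-Nardzewski selectors you mention are exactly what the paper uses to produce a Borel-varying dense sequence in each $C$, resolving your worry that a fixed dense subset of $G$ need not meet $C$ densely.

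So: the F{\o}lner-plus-selection skeleton is right for the totally disconnected part, but you are missing the structural reduction (amenable radical, connected-by-t.d.\ splitting, and the Caprace--Monod input for connected groups) that sidesteps the general measurable-Haar-measure difficulty rather than confronting it.
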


\begin{ack} 
The author thanks U. Bader, B. Duchesne, and J. L\'{e}cureux for including this appendix in their work and for their many helpful comments and suggestions. 
\end{ack}

\subsection{The class $\ms{G}$ and its closure properties}
We consider the class of all l.c.s.c. groups $G$ such that $\mathcal{S}_a(G)$ is a Borel set in $\mathcal{S}(G)$; we denote this class by $\ms{G}$.

\begin{theorem}\label{thm:A_properties}
The class $\ms{G}$ enjoys the following permanence properties:
\begin{enumerate}[(1)]

\item If $N\trianglelefteq G$ is an amenable closed subgroup and $G/N\in\ms{G}$, then $G\in \ms{G}$.

\item If $H\leq G$ is a finite index closed subgroup of $G$ and $H\in \ms{G}$, then $G\in \ms{G}$.

\item If $G_0,G_1\in \ms{G}$, then $G_0\times G_1\in \ms{G}$.

\end{enumerate}
\end{theorem}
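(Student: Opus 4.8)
The plan is to prove each of the three permanence properties for the class $\ms{G}$ separately, in each case expressing $\mathcal{S}_a(G)$ in terms of a Borel set arising from a group (or groups) already known to lie in $\ms{G}$, together with some Borel operation on $\mathcal{S}(G)$. The guiding principle throughout is that the Chabauty space is a standard Borel space and that natural maps between such spaces — intersection with a fixed closed subgroup, preimage under a continuous homomorphism, the map recording the finitely many cosets of $H$ met by $K$ — are Borel; the only genuinely content-bearing input is the algebraic fact that amenability of a closed subgroup can be detected after passing to a quotient or a finite-index subgroup.

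For part (1), with $N\trianglelefteq G$ amenable and $q\colon G\to G/N$ the quotient map, I would use that a closed subgroup $H\leq G$ is amenable if and only if $q(\overline{HN})=\overline{q(H)}$ is amenable in $G/N$: indeed $\overline{HN}$ is an extension of the amenable group $N$ by $\overline{q(H)}$, so $\overline{HN}$ is amenable iff $\overline{q(H)}$ is, and $H\leq \overline{HN}$ with $\overline{HN}/N$-part controlled — one must check $H$ amenable $\Leftrightarrow$ $\overline{HN}$ amenable, which holds because $H$ is cocompact in $\overline{HN}$ modulo $N$... more carefully, $\overline{HN}$ is amenable iff $H$ is, since $N$ amenable and $\overline{HN}/N$ is a quotient of $H$ while $H$ embeds with amenable-by-(quotient-of-$H$) ambient group. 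So $\mathcal{S}_a(G)$ is the preimage of $\mathcal{S}_a(G/N)$ under the map $H\mapsto \overline{q(H)}$, provided that map $\mathcal{S}(G)\to\mathcal{S}(G/N)$ is Borel; since $q$ is a continuous open surjection, $H\mapsto \overline{q(H)}$ is continuous (or at worst Borel), and $\mathcal{S}_a(G/N)$ is Borel by hypothesis, so $\mathcal{S}_a(G)$ is Borel.

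For part (2), with $H\leq G$ of finite index and $H\in\ms{G}$: a closed subgroup $K\leq G$ is amenable iff $K\cap H$ is amenable, because $K\cap H$ has finite index in $K$ and amenability passes between finite-index subgroups and overgroups. The map $K\mapsto K\cap H$ from $\mathcal{S}(G)$ to $\mathcal{S}(H)$ is Borel (intersection with a fixed closed set is Borel on the Chabauty space), and $\mathcal{S}_a(H)$ is Borel in $\mathcal{S}(H)$ by hypothesis, and $\mathcal{S}(H)$ sits as a closed (hence Borel) subset of $\mathcal{S}(G)$; composing, $\mathcal{S}_a(G)=\{K: K\cap H\in\mathcal{S}_a(H)\}$ is Borel. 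For part (3), a closed subgroup $L\leq G_0\times G_1$ need not be a product, so one cannot simply argue coordinatewise; instead I would use that $L$ is amenable iff both projections $p_0(L)\leq G_0$ and $p_1(L)\leq G_1$ have amenable closures — one direction because $\overline{p_i(L)}$ is a continuous image of $L$, the other because $L$ embeds in $\overline{p_0(L)}\times\overline{p_1(L)}$ which is amenable as a product of amenable groups. Since $L\mapsto \overline{p_i(L)}$ is Borel $\mathcal{S}(G_0\times G_1)\to\mathcal{S}(G_i)$ and $\mathcal{S}_a(G_i)$ is Borel, $\mathcal{S}_a(G_0\times G_1)=\{L: \overline{p_0(L)}\in\mathcal{S}_a(G_0)\text{ and }\overline{p_1(L)}\in\mathcal{S}_a(G_1)\}$ is Borel.

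The main obstacle I anticipate is not any single one of the algebraic equivalences — those are standard closure properties of amenability under extensions, finite-index subgroups, overgroups, and products — but rather verifying carefully that the relevant set-theoretic maps on Chabauty spaces ($K\mapsto K\cap H$, $K\mapsto\overline{q(K)}$, $L\mapsto\overline{p_i(L)}$) are Borel measurable, including handling the closures correctly so that the image genuinely lands in $\mathcal{S}$ of the target group. One should either cite a general lemma that these operations are Borel (they are, and in the finite-index and quotient cases they are in fact continuous), or give a short direct argument using the subbasic sets $O^K$ and $O_U$ defining the Chabauty topology. Once the measurability of these maps is in hand, each part is immediate, and in fact the proof of the theorem just assembles these three reductions.
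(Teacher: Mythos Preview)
Your proposal is correct and follows essentially the same route as the paper: in each part the paper uses exactly the maps you name ($C\mapsto\overline{\pi(C)}$ for the quotient, $C\mapsto C\cap H$ for the finite-index subgroup, and the two closed projections for the product), verifies the same algebraic equivalences, and checks Borelness directly via $\Pi^{-1}(O_U)=O_{\pi^{-1}(U)}$. Your discussion of (1) is slightly tangled---the clean converse is simply that $\pi^{-1}(\overline{\pi(C)})$ is an extension of $N$ by an amenable group, hence amenable, and $C$ sits inside it as a closed subgroup---but the content is the same.
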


\begin{proof}
For $(1)$, let $\pi:G\rightarrow G/N$ be the usual projection and define $\Pi:\mathcal{S}(G)\rightarrow \mathcal{S}(G/N)$ by $C\mapsto \ol{\pi(C)}$. The Borel sigma algebra of $\mathcal{S}(G/N)$ is generated by sets of the form $O_U:=\{C\mid C\cap U= \emptyset\}$ where $U$ ranges over open subsets of $G/N$. To verify $\Pi$ is Borel measurable, it is therefore enough to check $\Pi^{-1}(O_U)$ is a Borel set for each open $U\subseteq G/N$. This, however, is immediate since $\Pi^{-1}(O_U)=O_{\pi^{-1}(U)}$. \par

\indent To prove $(1)$, it now suffices to show $\mathcal{S}_a(G)=\Pi^{-1}(\mathcal{S}_a(G/N))$. For the forward direction, every compact, convex $\ol{\pi(C)}$-space is a compact convex $C$-space via the map $\pi$. We conclude there is an $C$-fixed point and, therefore, a $\ol{\pi(C)}$-fixed point. Via the fixed point criterion, $\ol{\pi(C)}$ is amenable. Conversely, suppose $\Pi(C)\in \mathcal{S}_a(G/N)$. So $\pi^{-1}(\Pi(C))\in \mathcal{S}(G)$, and $\pi^{-1}(\Pi(C))/N$ is amenable. Since amenability is stable under group extension, $\pi^{-1}(\Pi(C))\in \mathcal{S}_a(G)$. It now follows that $C\in \mathcal{S}_a(G)$. \par

\medskip

Claim $(2)$ is immediate. Indeed, the map $\Phi:\mathcal{S}(G)\rightarrow \mathcal{S}(H)$ via $C\mapsto C\cap H$ is Borel with $\Phi^{-1}(\mathcal{S}_a(H))=\mathcal{S}_a(G)$.\par

\medskip

\indent To see $(3)$, let $\pi_i$ for $i\in \{0,1\}$ be the projection onto the $i$-th coordinate. As with $(1)$, these maps induce Borel measurable maps $\Pi_i:\mathcal{S}(G_0\times G_1) \rightarrow \mathcal{S}(G_i)$. We claim 
\[
\mathcal{S}_a(G_0\times G_1)=\Pi_0^{-1}(\mc{S}_a(G_0))\cap \Pi_1^{-1}(\mc{S}_a(G_1)),
\]
from which the result follows. The forward direction follows as with $(1)$. Conversely, suppose $H\in \Pi_0^{-1}(\mc{S}_a(G_0))\cap \Pi_1^{-1}(\mc{S}_a(G_1))$, so $\ol{\pi_0(H)}\times \ol{\pi_1(H)}$ is amenable. Since $H\leq \ol{\pi_0(H)}\times \ol{\pi_1(H)}$ is a closed subgroup, we conclude that $H$ is amenable.\end{proof}

\subsection{T.d.l.c.s.c. groups}

We now show all totally disconnected locally compact second countable (t.d.l.c.s.c.) groups lie in $\ms{G}$. To do so, we will use F\o lner's condition \cite[Theorem G.5.1]{MR2415834}. A priori, F\o lner's condition is non-Borel since there is quantification over uncountable sets. We give a restatement that eliminates this problem. Our restatement makes use of an old theorem of D. van Dantzig: \textit{A t.d.l.c. group admits a basis at the identity of compact open subgroups}. See, for example, \cite[(7.7)]{MR551496}.

\begin{proposition}\label{prop:borel_folner_td} 
Let $G$ be a t.d.l.c.s.c. group with left invariant Haar measure $\mu$. Suppose $\mc{F}:=(g_i)_{i\in \N}$ is a countable dense subset of $G$ and $(V_i)_{i\in \N}$ is an $\subseteq$-decreasing basis at $1$ of compact open subgroups. Then the following are equivalent:
\begin{enumerate}[(1)]
\item $G$ is amenable.

\item (Borel F\o lner's condition) For all finite non-empty $F\subseteq \mc{F}$ and for all $n\geq 1$, there is a finite non-empty $H\subseteq \mc{F}$ and $i\in \N$ such that for $U:=\bigcup_{h\in H} h V_{i}$
\[
\frac{\mu(g_iU\Delta U)}{\mu(U)}\leq \frac{1}{n}
\]
for all $g_i\in Q:=\bigcup_{f\in F}fV_0$. 
\end{enumerate}
\end{proposition}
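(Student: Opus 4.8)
The plan is to prove the equivalence of amenability with the Borel F\o lner condition by sandwiching it between the classical F\o lner condition and amenability, using van Dantzig's theorem to pass between arbitrary compact sets and the compact open subgroups $V_i$.

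\textbf{From (1) to (2).} Suppose $G$ is amenable. Fix a finite non-empty $F \subseteq \mc{F}$ and $n \geq 1$, and set $Q = \bigcup_{f \in F} f V_0$; this is a compact set. By the classical F\o lner condition \cite[Theorem G.5.1]{MR2415834}, there is a compact set $K$ of positive finite measure with $\mu(gK \Delta K)/\mu(K) \leq 1/(2n)$ for all $g \in Q$. The point is to replace $K$ by a set of the required special form $U = \bigcup_{h \in H} h V_i$ with $H \subseteq \mc{F}$ finite and $i \in \N$, while keeping the F\o lner ratios under control. First, since the $V_i$ form a decreasing basis of compact open subgroups at $1$, one can choose $i$ large enough that $V_i$ is small compared to $K$ and to $Q$; in particular $K V_i$ still has F\o lner ratios at most, say, $1/n$ over $Q$ (continuity of multiplication and the fact that $\mu(KV_i \Delta K) \to 0$ as $i \to \infty$). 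Now $KV_i$ is a union of right translates $k V_i$ of the compact open subgroup $V_i$, and by compactness it is a finite such union, $KV_i = \bigcup_{j=1}^m k_j V_i$. Finally, since $\mc{F}$ is dense and each $k_j V_i$ is open, one can replace each $k_j$ by some $h_j \in \mc{F}$ with $h_j \in k_j V_i$, whence $h_j V_i = k_j V_i$; taking $H = \{h_1, \dots, h_m\}$ gives $U = \bigcup_{h \in H} h V_i = KV_i$, which satisfies the Borel F\o lner condition for this $F$ and $n$.

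\textbf{From (2) to (1).} Conversely, suppose the Borel F\o lner condition holds. We must verify the classical F\o lner condition: given an arbitrary compact set $Q_0 \subseteq G$ and $\eps > 0$, produce a compact set of positive finite measure with small F\o lner ratios over $Q_0$. Since $\mc{F}$ is dense and $V_0$ is open, finitely many translates $fV_0$ with $f \in F \subseteq \mc{F}$ cover $Q_0$, so $Q_0 \subseteq Q := \bigcup_{f \in F} fV_0$. Apply the Borel F\o lner condition to this finite $F$ and to $n$ with $1/n \leq \eps$: we obtain a finite non-empty $H \subseteq \mc{F}$ and $i \in \N$ with $U = \bigcup_{h \in H} h V_i$ of positive finite measure (it is a finite union of compact open cosets) such that $\mu(gU \Delta U)/\mu(U) \leq 1/n \leq \eps$ for all $g \in Q \supseteq Q_0$. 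Thus $U$ is the desired F\o lner set, and $G$ is amenable.

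\textbf{Main obstacle.} The only genuinely delicate point is the direction (1)$\Rightarrow$(2): one must convert the abstract F\o lner set $K$ produced by amenability into one of the prescribed combinatorial shape $\bigcup_{h \in H} hV_i$ with $H$ drawn from the fixed countable dense set. The two tools for this are van Dantzig's theorem (which makes $KV_i$ a union of compact open cosets, hence finitely many by compactness) and density of $\mc{F}$ (which lets us choose the coset representatives inside $\mc{F}$, legitimate precisely because $h_j V_i = k_j V_i$ whenever $h_j \in k_j V_i$, as $V_i$ is a subgroup). One should be mildly careful that thickening $K$ to $KV_i$ does not spoil the F\o lner ratio, which holds because $\mu(KV_i) \to \mu(\ol{K})$-ish controlled quantities as $i\to\infty$ and translation is continuous; choosing $i$ large handles this uniformly over the finite set $Q$.
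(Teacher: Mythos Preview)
Your direction $(1)\Rightarrow(2)$ is correct and in fact somewhat cleaner than the paper's argument. The paper starts from an arbitrary Borel F\o lner set $U$, sandwiches it between a compact $K$ and an open $O$ via regularity of the Haar measure, and then covers $K$ by cosets $gV_j\subseteq O$ to produce $\Omega=\bigcup_{h\in H}hV_i$; the F\o lner ratio for $\Omega$ is controlled through the triangle inequality for the pseudometric $d_\mu(A,B)=\mu(A\Delta B)$. Your route---thicken a compact F\o lner set $K$ to $KV_i$ and observe that $\mu(KV_i\setminus K)\to 0$ since $\bigcap_i KV_i=K$---avoids the inner/outer approximation entirely and lands directly on a finite union of $V_i$-cosets. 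The replacement of coset representatives by elements of $\mc{F}$ is identical in both.

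There is, however, a genuine gap in your $(2)\Rightarrow(1)$. Condition~(2) asserts the F\o lner inequality only for the countably many $g_i\in\mc{F}$ that lie in $Q$, not for all $g\in Q$; you silently upgrade this to ``for all $g\in Q\supseteq Q_0$''. The paper spends a paragraph on exactly this point: given $x\in Q$, one picks $g_k\to x$ from $\mc{F}\cap Q$ and observes that for $k$ large enough $h^{-1}g_k^{-1}xh\in V_i$ for every $h\in H$ (finiteness of $H$), whence $g_k^{-1}x\cdot hV_i=hV_i$ for all $h\in H$ and therefore $g_kU=xU$. Equivalently, since $U$ is a finite union of left $V_i$-cosets, the map $g\mapsto gU$ is locally constant, so the inequality passes from a dense subset of $Q$ to all of $Q$. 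This is an easy fix, but without it the implication is incomplete.
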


\begin{proof}
For the reverse direction, let $K\subseteq G$ be compact. Since $\mc{F}$ is dense, there is a finite $F\subseteq \mc{F}$ such that $K\subseteq Q:=\bigcup_{f\in F}fV_0$. Fixing $n\geq 1$, condition $(2)$ now supplies a $U=\bigcup_{h\in H} h V_{i}$ for some finite non-empty $H\subseteq \mc{F}$ that satisfies the inequality condition for a dense subset of $Q$. It suffices to show the inequality holds for \textit{all} $x\in Q$. To this end, fix $x\in Q$ and let $g_j\rightarrow x$ be such that the inequality holds for all $j$. By taking a sufficiently large $k$, we have $h^{-1}g^{-1}_kxh\in V_i$ for all $h\in H$. Hence, $g_kU=xU$, and the inequality holds for $x$. \par

\medskip

\indent Conversely, suppose $Q:=\bigcup_{f\in F}fV_0$ with $F\subseteq \mc{F}$ finite and non-empty. Fix $n>0$ and $0<\delta<\frac{1}{n}$ and apply the F\o lner condition to find a Borel $U$ with $0<\mu(U)<\infty$ such that 
\[
\frac{\mu(xU\Delta U)}{\mu(U)}\leq \delta <\frac{1}{n}
\]
for all $x\in Q$. \par

\indent We now approximate $U$ by a set of the desired form. Fix $\epsilon>0$ small enough so that
\[
\frac{1}{1-\epsilon} \left(\frac{\mu(xU\Delta U)}{\mu(U)} +4\epsilon \right)<\frac{1}{n}
\]
for all $x\in Q$. By inner and outer regularity of the Haar measure, we may find a compact, non-empty $K$ and open $O$ such that $K\subseteq U \subseteq O$, $\mu(K)\geq (1-\epsilon)\mu(U)$, and $\mu(O)\leq (1+\epsilon)\mu(U)$. Now for each $k\in K$, there is $V_j$ and $g\in \mc{F}$ with $k\in gV_j\subseteq O$. The $gV_j$ form an open cover of $K$, hence there is some finite, non-empty $H'\subseteq \mc{F}$ so that $K\subseteq \Omega:=\bigcup_{h\in H'} h V_{j(h)}$. By taking $i=\max\{j(h)\mid h\in H'\}$ and possibly expanding $H'$ by finitely many elements of $\mc{F}$, $\Omega=\bigcup_{h\in H}hV_i$. So $\Omega$ has the correct form.\par

\indent We here argue $\Omega$ satisfies $(2)$ for $Q$ and $\frac{1}{n}$. By construction, $K\subseteq \Omega\subseteq O$, whereby
\[
(1-\epsilon)\mu(U)\leq \mu(\Omega)\leq (1+\epsilon)\mu(U).
\]
Thus, $\frac{1}{\mu(\Omega)}\leq\frac{1}{(1-\epsilon)\mu(U)}$. On the other hand, recall $\mu(A\Delta B)=:d_{\mu}(A,B)$ gives a pseudometric on the algebra of measurable sets that is invariant under measure preserving maps; consider \cite[Chapter 1]{MR2583950}. For any $x\in Q$, we therefore have
\[
\begin{array}{ccl}
d_{\mu}(x\Omega, \Omega)-d_{\mu}(xU, U)  & \leq & d_{\mu}(x\Omega,xU)+d_{\mu}(xU,U)+d_{\mu}(U,\Omega)-d_{\mu}(xU,U)\\
										& = & 2d_{\mu}(\Omega,U)\\
										& \leq & 2\mu(O\setminus \Omega)+2\mu(O\setminus U)\\
										&\leq & 4\epsilon \mu(U)
\end{array}
\] 
and $\mu(x\Omega \Delta \Omega)\leq \mu(xU\Delta U )+4\epsilon \mu(U)$. We conclude
\[
\frac{\mu(x\Omega \Delta \Omega)}{\mu(\Omega)}\leq \frac{1}{1-\epsilon} \left(\frac{\mu(xU\Delta U)}{\mu(U)} +4\epsilon \right)<\frac{1}{n},
\]
hence $\Omega$ satisfies $(2)$ for $Q$ and $\frac{1}{n}$.
\end{proof}

The next lemma gives a technique for computing the Haar measure in a Borel way.

\begin{lemma}\label{lem:haar}
Suppose $G$ is a t.d.l.c.s.c. group with left invariant Haar measure $\mu$ and let $(V_i)_{i\in \N}$ be an $\subseteq$-decreasing basis at $1$ of compact open subgroups. For all non-empty compact open sets $O$ and $L$, there are finite sets $W\subseteq O$ and $K\subseteq L$ and $i\in \N$ such that $O=\bigsqcup_{w\in W}wV_i$ and $L=\bigsqcup_{k\in K}kV_i$. Therefore, $\frac{\mu(O)}{\mu(L)}= \frac{|W|}{|K|}$.
\end{lemma}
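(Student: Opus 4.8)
The plan is to reduce both $O$ and $L$ to finite disjoint unions of left cosets of a single compact open subgroup $V_i$, and then use that left-invariance of $\mu$ forces all such cosets to have the common measure $\mu(V_i)$, which is finite and strictly positive because $V_i$ is compact open.

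First I would produce, for a fixed compact open set, a covering by cosets that lie inside it. For every $x\in O$, since $O$ is open and $\{xV_j\}_{j\in\N}$ is a neighbourhood basis at $x$, there is $i(x)$ with $xV_{i(x)}\subseteq O$; as the family $(V_j)$ is $\subseteq$-decreasing, $xV_j\subseteq O$ for every $j\geq i(x)$. The sets $xV_{i(x)}$ ($x\in O$) form an open cover of the compact set $O$, so finitely many of them, corresponding to points $x_1,\dots,x_m$, already cover $O$. Running the same argument for $L$ yields points $y_1,\dots,y_\ell$. Putting $i:=\max\{i(x_1),\dots,i(x_m),i(y_1),\dots,i(y_\ell)\}$, monotonicity of the basis gives $O=\bigcup_{j}x_jV_i$ and $L=\bigcup_{k}y_kV_i$ with every coset $x_jV_i$ contained in $O$ and every coset $y_kV_i$ contained in $L$.

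Next, since $V_i$ is a subgroup, two of its left cosets are either equal or disjoint; discarding repetitions among $x_1V_i,\dots,x_mV_i$ leaves a finite set of representatives $W$, which may be taken inside $O$ (each $x_jV_i\subseteq O$ forces $x_j\in O$), with $O=\bigsqcup_{w\in W}wV_i$, and likewise a finite $K\subseteq L$ with $L=\bigsqcup_{k\in K}kV_i$. Then $\mu(O)=\sum_{w\in W}\mu(wV_i)=|W|\,\mu(V_i)$ and $\mu(L)=|K|\,\mu(V_i)$, and dividing (legitimate since $0<\mu(V_i)<\infty$) gives $\mu(O)/\mu(L)=|W|/|K|$. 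There is no real difficulty in the argument; the only point to watch is that one must pick a single index $i$ working for both $O$ and $L$ simultaneously, which is precisely why the maximum is taken and why the basis being $\subseteq$-decreasing is used.
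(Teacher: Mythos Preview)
Your approach is the same as the paper's, but there is a small slip at the step where you pass from the cover by $x_jV_{i(x_j)}$ to a cover by $x_jV_i$. You write that ``monotonicity of the basis gives $O=\bigcup_{j}x_jV_i$'', but monotonicity only gives the inclusion $x_jV_i\subseteq x_jV_{i(x_j)}\subseteq O$, i.e.\ the containment $\bigcup_j x_jV_i\subseteq O$. The reverse inclusion can fail: shrinking the subgroup shrinks each coset, so the same finite set of centres $x_1,\dots,x_m$ need not cover $O$ any more. (Concretely, if $O$ happens to equal a single $x_1V_{i(x_1)}$ and $i>i(x_1)$ because $L$ forces a larger index, then $x_1V_i$ is a proper subset of $O$.)

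The fix is immediate and is exactly what the paper does: since $V_i$ has finite index in each $V_{i(x_j)}$ (both being compact open), every $x_jV_{i(x_j)}$ is a finite disjoint union of left $V_i$-cosets, all of which lie in $O$ because $x_jV_{i(x_j)}\subseteq O$. Enlarging your set of representatives to include one from each of these finer cosets yields a finite $W\subseteq O$ with $O=\bigcup_{w\in W}wV_i$, and similarly for $L$. After this adjustment your argument goes through verbatim, and your concluding measure computation is correct.
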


\begin{proof}
For each $o\in O$ there is $V_{i(o)}\in (V_i)_{i\in \N}$ such that $oV_{i(o)}\subseteq O$, so $O=\bigcup_{o\in O}oV_{i(o)}$. Since $O$ is compact, there is a finite, non-empty set $W\subseteq O$ such that $O=\bigcup_{o\in W} oV_{i(o)}$. We may likewise write $L=\bigcup_{l\in K}lV_{i(l)}$ for $K\subseteq L$ finite and non-empty. Taking $i=\max\{i(x)\mid x\in W\cup K\}$ and possibly expanding $W$ and $K$ by finitely many elements, we have $O=\bigcup_{o \in W} oV_i$ and $L=\bigcup_{l\in K}lV_i$. We eliminate redundant cosets to conclude $O=\bigsqcup_{o \in W'} oV_i$ and $L=\bigsqcup_{l\in K'}lV_i$ for some finite, non-empty $W'\subseteq O$ and $K'\subseteq L$. 
\end{proof}

\begin{theorem}\label{thm:tdlcsc} 
If $G$ is a t.d.l.c.s.c. group, then $\mathcal{S}_a(G)$ is a Borel set in $\mathcal{S}(G)$.
\end{theorem}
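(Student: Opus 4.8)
The plan is to realize $\mathcal{S}_a(G)$ as a countable Boolean combination of Borel sets built from the Borel F\o lner condition of Proposition \ref{prop:borel_folner_td}, now quantified over the closed subgroup $H$ rather than over $G$ itself. The key observation is that by van Dantzig's theorem, a closed subgroup $H\leq G$ is totally disconnected and locally compact, and the sets $V_i\cap H$ form an $\subseteq$-decreasing basis at $1$ of compact open subgroups of $H$; the Haar measure of $H$ on finite unions of cosets of $V_i\cap H$ can be computed combinatorially via Lemma \ref{lem:haar} as a ratio of cardinalities of coset representatives. So the quantities appearing in the F\o lner inequality for $H$ are, in principle, functions of $H$ that only depend on finitely much combinatorial data.

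First I would fix, once and for all, a countable dense subset $\mc{F}=(g_i)_{i\in\N}$ of $G$ and an $\subseteq$-decreasing basis $(V_i)_{i\in\N}$ at $1$ of compact open subgroups of $G$. For a closed subgroup $H$, a finite subset $F\subseteq\mc{F}$, a finite subset $H'\subseteq\mc{F}$, and indices $i,n$, I would express the statement ``$U:=\bigcup_{h\in H'}h(V_i\cap H)$ satisfies $\mu_H(g U\Delta U)/\mu_H(U)\leq 1/n$ for all $g\in Q\cap H$'' purely in terms of: which products $g_k^{-1}g_l$ lie in a given $V_j$, and which of the relevant group elements actually lie in $H$. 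Both of these are Chabauty-Borel (indeed clopen) conditions on $H$: membership $x\in H$ is closed, and $x\notin H$ is open, so for any fixed finite list of elements the ``pattern'' of which ones lie in $H$ is a Borel condition. Since the F\o lner condition for $H$ is a countable conjunction (over $F$, $n$) of countable disjunctions (over $H'$, $i$) of such finite-pattern conditions, the set of $H$ for which it holds is Borel. By Proposition \ref{prop:borel_folner_td} applied inside $H$ — which is legitimate because $H$ is itself a t.d.l.c.s.c. group and $\mc{F}\cap\text{(suitable dense set)}$ can be replaced by a genuinely dense subset of $H$ — this Borel set is exactly $\mathcal{S}_a(G)$.

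The main obstacle I anticipate is the mismatch between the fixed ambient data $(\mc{F},V_i)$ and the data one needs \emph{inside} $H$: Proposition \ref{prop:borel_folner_td} wants a dense subset of $H$, but $\mc{F}$ need not meet $H$ at all, and $V_i\cap H$ is a basis of compact open subgroups of $H$ but the coset representatives live in $H$, not in $\mc{F}$. Resolving this cleanly requires choosing the approximating F\o lner sets $U$ to be unions of $(V_i\cap H)$-cosets with representatives taken from $G$ (equivalently: first enlarge $U$ inside $G$ to a union of $V_i$-cosets, intersect with $H$), and checking that the combinatorial Haar-ratio computation of Lemma \ref{lem:haar} still goes through when the ambient set and the reference compact open subgroup are intersected with $H$ — here one uses that $\mu_H$ restricted to the compact open subgroup $V_i\cap H$ is, up to a global scalar, counting cosets of $V_j\cap H$ inside $V_i\cap H$, and that this count is again a Borel function of $H$ (it equals the number of double cosets $g(V_j\cap H)$ inside $V_i\cap H$, detectable from membership patterns among finitely many products). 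Once this bookkeeping is set up, the verification that each piece of the Boolean combination is Borel is routine, and Theorem \ref{thm:tdlcsc} follows; combined with Theorem \ref{thm:A_properties} and the structure theory reducing a general l.c.s.c. group to the totally disconnected case modulo its connected (hence amenable-by-Lie, with the Lie part handled by linearity) component, one obtains Theorem \ref{thm:A}.
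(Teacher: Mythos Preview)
Your overall architecture matches the paper's exactly: apply the Borel F\o lner condition of Proposition~\ref{prop:borel_folner_td} fibrewise to each $H\in\mathcal{S}(G)$, using $(V_i\cap H)_{i\in\N}$ as the basis of compact open subgroups, and then verify that every quantifier in the resulting formula is countable and every atomic predicate is Borel in $H$. Where you diverge from the paper is precisely at the obstacle you yourself flag, and your proposed resolution of it is the gap.

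The difficulty is real: Proposition~\ref{prop:borel_folner_td} requires a countable dense subset \emph{of the group being tested}, and the fixed ambient set $\mc{F}\subset G$ need not meet $H$ at all. Your workaround --- parametrize compact open subsets of $H$ as $\bigl(\bigcup_{h\in H'} hV_i\bigr)\cap H$ with $H'\subseteq\mc{F}$, and compute Haar ratios combinatorially --- handles the F\o lner \emph{candidate} sets $U$ correctly, but it does not handle the \emph{test} elements $g$. The inequality must hold for all $g$ in $Q\cap H$, which is uncountable; reducing this to a countable check requires an extra argument (namely, that for fixed $H'$ and $i$ there is an index $j$, depending only on $H'$ and $i$ and not on $H$, such that $g\mapsto gU$ is constant on left $(V_j\cap H)$-cosets, so that one may enumerate test elements via the $V_j$-cosets in $Q$ that meet $H$). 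This is doable, but it is not the ``routine bookkeeping'' you describe, and you have not supplied it.

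The paper sidesteps the whole issue with a single device you do not mention: Kuratowski--Ryll-Nardzewski selector functions $d_i:\mathcal{S}(G)\to G$, a sequence of Borel maps with $\{d_i(C)\}_{i\in\N}$ dense in $C$ for every $C$. These give, uniformly and Borel-measurably in $C$, exactly the dense subset of $C$ that Proposition~\ref{prop:borel_folner_td} wants, so the reduction to countable Boolean combinations goes through without any coset-juggling. Once you have the selectors, the verification that the sets $\Omega$ and $\Sigma$ in the paper's proof are Borel really is routine, because every group element appearing is $d_k(C)$ for some $k$ and hence a Borel function of $C$.
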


\begin{proof}
Fix $(V_i)_{i\in \N}$ an $\subseteq$-decreasing basis at $1$ of compact open subgroups for $G$. Let $(d_i)_{i\in \N}$ be a set of Kuratowski$-$Ryll-Nardzewski selector functions for $\mathcal{S}(G)$. That is to say, a set of Borel functions $d_i:\mathcal{S}(G)\rightarrow G$ such that $\{d_i(C)\}_{i\in \N}$ is dense in $C$ for all $C\in \mathcal{S}(G)$; see \cite[(12.13)]{MR1321597} for example. \par

\indent For each $C\in \mathcal{S}(G)$, $(C\cap V_i)_{i\in \N}$ forms an $\subseteq$-decreasing basis at $1$ of compact open subgroups of $C$, and $(d_i(C))_{i\in \N}$ forms a countable dense subset. In view of Proposition~\rm\ref{prop:borel_folner_td}, $C\in \mathcal{S}_a(G)$ if and only if for all finite non-empty $F\subseteq \N$ and for all $n\geq 1$, there is a finite non-empty $H\subseteq \N$ and $i\in \N$ such that for $U:=\bigcup_{h\in H} d_h(C) \left(C\cap V_{i}\right)$
\[
\frac{\mu_C(d_j(C)U\Delta U)}{\mu_C(U)}\leq \frac{1}{n}
\]
for all $d_j(C)\in Q:=\bigcup_{f\in F}d_f(C)\left( C\cap V_0\right)$ where $\mu_C$ is the left invariant Haar measure on $C$. For $F\subseteq \N$ finite non-empty, $n>0$, $j\in \N$, $H\subseteq \N$ finite non-empty, and $i\in \N$, put
\[
\Omega:=\left\{C\in \mathcal{S}(G)\mid \frac{\mu_C(d_j(C)U\Delta U)}{\mu_C(U)}\leq \frac{1}{n} \text{ with }U=\bigcup_{h\in H} d_h(C) \left(C\cap V_{i}\right)\right\}
\]
and
\[
\Sigma:=\left\{C\in \mathcal{S}(G)\mid d_j(C)\notin Q \right\}.
\]
Since $\mc{S}_a(G)$ is a combination of countable intersections and unions of sets with the forms of $\Omega$ and $\Sigma$, showing $\Omega$ and $\Sigma$ are Borel proves the theorem. \par

\indent The case of $\Sigma$ is immediate: For each $f\in F$, define $\Phi_f:\mc{S}(G)\rightarrow G$ by $C\mapsto d_f(C)^{-1}d_j(C)$. This is a Borel measurable map since the functions $d_i$ are Borel and the group operations are continuous. We thus see that
\[
\Sigma =\bigcap_{f\in F}\Phi_f^{-1}(V^c_0)
\]
and, therefore, is Borel.\par

\indent To see $\Omega$ is Borel, we apply Lemma~\rm\ref{lem:haar}. Indeed, $C\in \Omega$ if and only if either $d_j(C)U=U$ or there is $l\geq i$ and finite non-empty sets $W,K\subseteq \N$ with $\frac{|W|}{|K|}\leq \frac{1}{n}$ so that
\begin{enumerate}[(1)]
\item $xU\Delta U=\bigsqcup_{w\in W}d_w(C)(C\cap V_l)$ and

\item $U= \bigsqcup_{k\in K}d_k(C)(C\cap V_l)$.
\end{enumerate}
It is easy to verify the latter statement is Borel. We check the first disjunct: $d_j(C)U=U$ if and only if
\begin{enumerate}[(1)]
\item $\forall h\in H\;\forall m\in \N\; \exists k\in H$ so that $d_k(C)^{-1}d_j(C)d_h(C)d_m(C\cap V_i)\in V_i$ and 

\item $\forall h\in H\;\forall m\in \N\; \exists k\in H$ so that $d_k(C)^{-1}d_j(C)^{-1}d_h(C)d_m(C\cap V_i)\in V_i$.
\end{enumerate}
Let $\Phi_{h,m,k}:\mathcal{S}(G)\rightarrow G$ by $C\mapsto d_k(C)^{-1}d_j(C)d_h(C)d_m(C\cap V_i)$ and $\Psi_{h,m,k}:\mathcal{S}(G)\rightarrow G$ by $C\mapsto d_k(C)^{-1}d_j(C)^{-1}d_h(C)d_m(C\cap V_i)$. Since the $d_i$ are Borel functions and the group operations are continuous, $\Phi_{h,m,k}$ and $\Psi_{h,m,k}$ are Borel, hence 
\[
\{C\in \mathcal{S}(G)\mid d_j(C)U=U\}=\bigcap_{h\in H}\bigcap_{m\in \N}\bigcup_{k\in H}\Phi_{h,m,k}^{-1}(V_i)\cap \bigcap_{h\in H}\bigcap_{m\in \N}\bigcup_{k\in H} \Psi_{h,m,k}^{-1}(V_i)
\] 
is Borel. The second disjunct follows similarly. We conclude $\mathcal{S}_a(G)$ is a Borel subset of $\mathcal{S}(G)$ verifying the theorem.
\end{proof}

\begin{proof}[Proof of Theorem~\ref{thm:A}]
Let $G$ be a l.c.s.c. group and form $R_a(G)$, the amenable radical of $G$. In view of Theorem~\ref{thm:A_properties}, if $G/R_a(G)\in \ms{G}$, then $G\in \ms{G}$, so we may assume $R_a(G)=\{1\}$ without loss of generality. Via \cite[Theorem 11.3.4]{MR1840942}, there is $H\leq G$ a finite index closed subgroup that is a direct product of a connected group and a totally disconnected group. Since $H\in \ms{G}$ implies $G\in \ms{G}$, we may reduce again to $G\simeq G_0\times G_1$ with $G_0$ a connected l.c.s.c. group and $G_1$ a t.d.l.c.s.c. group. Now \cite[Theorem 2]{Caprace:2013kq} gives that $G_0\in \ms{G}$, and $G_1\in \ms{G}$ via Theorem~\ref{thm:tdlcsc}. We conclude $G\in \ms{G}$ verifying the theorem.

\end{proof}

\bibliographystyle{halpha}
\bibliography{biblio.bib}

\newcommand{\etalchar}[1]{$^{#1}$}
\begin{thebibliography}{BdlHV08}

\bibitem[AB98]{MR1645958}
Scot Adams and Werner Ballmann.
\newblock Amenable isometry groups of {H}adamard spaces.
\newblock {\em Math. Ann.}, 312(1):183--195, 1998.

\bibitem[ABB{\etalchar{+}}12]{Abert:2012qy}
Miklos Abert, Nicolas Bergeron, Ian Biringer, Tsachik Gelander, Nikolay
  Nikolov, Jean Raimbault, and Iddo Samet.
\newblock On the growth of $l^2$-invariants for sequences of lattices in lie
  groups.
\newblock 10 2012, 1210.2961.

\bibitem[AGV12]{Abert:2012jk}
Miklos Abert, Yair Glasner, and Balint Virag.
\newblock Kesten's theorem for invariant random subgroups.
\newblock 01 2012, 1201.3399.

\bibitem[BdlHV08]{MR2415834}
Bachir Bekka, Pierre de~la Harpe, and Alain Valette.
\newblock {\em Kazhdan's property ({T})}, volume~11 of {\em New Mathematical
  Monographs}.
\newblock Cambridge University Press, Cambridge, 2008.

\bibitem[Bee93]{MR1269778}
Gerald Beer.
\newblock {\em Topologies on closed and closed convex sets}, volume 268 of {\em
  Mathematics and its Applications}.
\newblock Kluwer Academic Publishers Group, Dordrecht, 1993.

\bibitem[BG04]{MR2059438}
N.~Bergeron and D.~Gaboriau.
\newblock Asymptotique des nombres de {B}etti, invariants {$l^2$} et
  laminations.
\newblock {\em Comment. Math. Helv.}, 79(2):362--395, 2004.

\bibitem[Bou04]{MR2018901}
Nicolas Bourbaki.
\newblock {\em Integration. {I}. {C}hapters 1--6}.
\newblock Elements of Mathematics (Berlin). Springer-Verlag, Berlin, 2004.
\newblock Translated from the 1959, 1965 and 1967 French originals by Sterling
  K. Berberian.

\bibitem[BS01]{MR1873300}
Itai Benjamini and Oded Schramm.
\newblock Recurrence of distributional limits of finite planar graphs.
\newblock {\em Electron. J. Probab.}, 6:no. 23, 13 pp. (electronic), 2001.

\bibitem[BT14]{Biringer:2014fr}
Ian Biringer and Omer Tamuz.
\newblock Unimodularity of invariant random subgroups.
\newblock 02 2014, 1402.1042.

\bibitem[CM13]{Caprace:2013kq}
Pierre-Emmanuel Caprace and Nicolas Monod.
\newblock Relative amenability.
\newblock 09 2013, 1309.2890.

\bibitem[DGLL14]{DGLL}
Bruno Duchesne, Yair Glasner, Nir Lazarovich, and Jean L\'ecureux.
\newblock Geometric density for invariant random subgroups of groups acting on
  cat(0) spaces.
\newblock Preprint, 2014.

\bibitem[FVM12]{MR2892915}
Talia Fern{{\'o}}s, Alain Valette, and Florian Martin.
\newblock Reduced 1-cohomology and relative property ({T}).
\newblock {\em Math. Z.}, 270(3-4):613--626, 2012.

\bibitem[Gla14]{Gelander:2014kq}
Yair Glasner.
\newblock Invariant random subgroups of linear groups.
\newblock 07 2014, 1407.2872.

\bibitem[HR79]{MR551496}
Edwin Hewitt and Kenneth~A. Ross.
\newblock {\em Abstract harmonic analysis. {V}ol. {I}}, volume 115 of {\em
  Grundlehren der Mathematischen Wissenschaften [Fundamental Principles of
  Mathematical Sciences]}.
\newblock Springer-Verlag, Berlin-New York, second edition, 1979.
\newblock Structure of topological groups, integration theory, group
  representations.

\bibitem[HT13]{Hartman:2013yf}
Yair Hartman and Omer Tamuz.
\newblock Stabilizer rigidity in irreducible group actions.
\newblock 07 2013, 1307.7539.

\bibitem[Jol05]{MR2154620}
Paul Jolissaint.
\newblock On property ({T}) for pairs of topological groups.
\newblock {\em Enseign. Math. (2)}, 51(1-2):31--45, 2005.

\bibitem[Kec95]{MR1321597}
Alexander~S. Kechris.
\newblock {\em Classical descriptive set theory}, volume 156 of {\em Graduate
  Texts in Mathematics}.
\newblock Springer-Verlag, New York, 1995.

\bibitem[Kec10]{MR2583950}
Alexander~S. Kechris.
\newblock {\em Global aspects of ergodic group actions}, volume 160 of {\em
  Mathematical Surveys and Monographs}.
\newblock American Mathematical Society, Providence, RI, 2010.

\bibitem[Mon01]{MR1840942}
Nicolas Monod.
\newblock {\em Continuous bounded cohomology of locally compact groups}, volume
  1758 of {\em Lecture Notes in Mathematics}.
\newblock Springer-Verlag, Berlin, 2001.

\bibitem[Pau07]{MR2406240}
Fr{{\'e}}d{{\'e}}ric Paulin.
\newblock De la g{\'e}om{\'e}trie et de la dynamique de {${\rm SL}_n(\Bbb R)$}
  et {${\rm SL}_n(\Bbb Z)$}.
\newblock In {\em Sur la dynamique des groupes de matrices et applications
  arithm{\'e}tiques}, pages 47--110. Ed. {\'E}c. Polytech., Palaiseau, 2007.

\bibitem[Rud91]{MR1157815}
Walter Rudin.
\newblock {\em Functional analysis}.
\newblock International Series in Pure and Applied Mathematics. McGraw-Hill,
  Inc., New York, second edition, 1991.

\bibitem[Sha00]{MR1767270}
Yehuda Shalom.
\newblock Rigidity of commensurators and irreducible lattices.
\newblock {\em Invent. Math.}, 141(1):1--54, 2000.

\bibitem[Sub12]{Cones}
W.~W. Subramanian.
\newblock Cones, positivity and order units.
\newblock Master's thesis, Mathematical Institute, Leiden University,
  https://www.math.leidenuniv.nl/scripties/MasterSubramanian.pdf, September
  2012.

\bibitem[TD12]{Tucker-Drob:2012vn}
Robin~D. Tucker-Drob.
\newblock Shift-minimal groups, fixed price 1, and the unique trace property.
\newblock 11 2012, 1211.6395.

\bibitem[Zim84]{MR776417}
Robert~J. Zimmer.
\newblock {\em Ergodic theory and semisimple groups}, volume~81 of {\em
  Monographs in Mathematics}.
\newblock Birkh{\"a}user Verlag, Basel, 1984.

\end{thebibliography}

\end{document}